\newtheorem{thm}{Theorem}[section]
\newtheorem{ex}[thm]{Example}
\newtheorem{que}[thm]{Question}
\newtheorem{de}[thm]{Definition}
\newtheorem{lm}[thm]{Lemma}
\newtheorem{prop}[thm]{Proposition}
\newtheorem{re}[thm]{Remark}
\newcommand{\ZZ}{{\mathbb Z}}
\newcommand{\RR}{{\mathbb R}}
\newcommand{\CC}{{\mathbb C}}
\newcommand{\GL}{\operatorname{GL}\nolimits}
\newcommand{\SL}{\operatorname{SL}\nolimits}
\newcommand{\Gr}{\operatorname{Gr}\nolimits}
\newcommand{\im}{\operatorname{im}}
\newcommand{\lieg}[1]{\mathrm{#1}}
\newcommand{\cha}{\operatorname{char}}
\newcommand{\AGL}{\mathrm{AGL}}
\newcommand{\cB}{\mathcal{B}}
\renewcommand{\phi}{\varphi}
\newcommand{\ph}{\phantom}
\newcommand*\samethanks[1][\value{footnote}]{\footnotemark[#1]}
\newcommand{\lsim}{\stackrel{\textstyle <}{\raisebox{-.6ex}{\(\sim\)}}}
\begin{document}

\title{Uniform determinantal representations}
\author{Ada Boralevi\thanks{%
Scuola Internazionale Superiore di Studi Avanzati, via Bonomea 265, 34136 Trieste, Italy,
\texttt{ada.boralevi@sissa.it}.
This author is member of GNSAGA and was partially supported by PRIN 2010-2011
project \lq\lq Geometria delle variet\`a algebriche\rq\rq and Universit\`a
di Trieste--FRA 2011 project \lq\lq Geometria e topologia delle variet\`a\rq\rq.}
\and
Jasper~van~Doornmalen\thanks{%
Department of Mathematics and Computer Science, TU Eindhoven, PO Box 513, 5600 MB, The Netherlands,
\texttt{m.j.v.doornmalen@student.tue.nl, j.draisma@tue.nl, m.e.hochstenbach@tue.nl}.
The first, third, and fourth author have been (partially) supported by NWO Vidi
research grants.}
\and
Jan~Draisma$^\dagger$\thanks{Faculteit Exacte Wetenschappen,
Afdeling Wiskunde,
Vrije Universiteit,
De Boelelaan 1081a,
1081 HV Amsterdam,
The Netherlands.}
\and
Michiel~E.~Hochstenbach\samethanks[2]
\and
Bor~Plestenjak\thanks{%
IMFM and Department of Mathematics, University of Ljubljana, Jadranska 19, 1000 Ljubljana, Slovenia,
\texttt{bor.plestenjak@fmf.uni-lj.si}.  This
author was partially supported by the ARRS Grant P1-0294.}
}


\maketitle

\begin{abstract}
The problem of expressing a specific polynomial as the determinant of
a square matrix of affine-linear forms arises from algebraic geometry,
optimisation, complexity theory, and scientific computing. Motivated
by recent developments in this last area, we introduce the notion of a
{\em uniform determinantal representation}, not of a single polynomial
but rather of all polynomials in a given number of variables and of
a given maximal degree. We derive a lower bound on the size of the
matrix, and present a construction achieving that lower bound up to a
constant factor as the number of variables is fixed and the degree grows.
This construction marks an improvement upon a recent construction due
to Plestenjak--Hochstenbach, and we investigate the performance of new
representations in their root-finding technique for bivariate
systems. Furthermore, we relate uniform determinantal representations
to vector spaces of singular matrices, and we conclude
with a number of future research directions.
\end{abstract}

\begin{keywords}
Determinantal representation, system of polynomial equations,
multiparameter matrix eigenvalue problem, space of singular matrices.
\end{keywords}

\begin{AMS}
13P15, 65H04, 65F15, 65F50.
\end{AMS}

\pagestyle{myheadings}
\thispagestyle{plain}
\markboth{BORALEVI ET AL}{UNIFORM DETERMINANTAL REPRESENTATIONS}

\section{Introduction and results} \label{sec:Intro}
Consider an $n$-variate polynomial of degree at most $d$:
\[ p=\sum_{|\alpha| \le d} c_{\alpha} x^{\alpha} \]
where $x:=(x_1,\ldots,x_n)$, $\alpha \in \ZZ_{\ge 0}^n$,
$|\alpha|:=\sum_i \alpha_i$, $x^{\alpha}:=\prod_i x_i^{\alpha_i}$,
and where each coefficient $c_\alpha$ is taken from a ground field $K$.
A {\em determinantal representation} of $p$ is an $N \times N$-matrix $M$
of the form
\[ M=A_0 + \sum_{i=1}^n x_i A_i, \]
where each $A_i \in K^{N \times N},$ with $\det(M)=p$. We call $N$
the {\em size} of the determinantal representation. Clearly, since the
entries of $M$ are affine-linear forms in $x_1,\ldots,x_n$, $N$ must be
at least the degree of $p$.

Determinantal representations of polynomials play a fundamental
role in several mathematical areas: from {\em algebraic geometry}
it is known that each plane curve ($n=2$) of degree $d$ over an
algebraically closed field $K$ admits a determinantal representation of
size $d$ \cite{Dickson,Dixon}. Over non-algebraically closed fields, and
especially when restricting to symmetric determinantal representations,
the situation is much more subtle \cite{IshitshukaIto}.  For larger $n$,
only certain hypersurfaces have a determinantal representation of size
equal to their degree \cite{Beauville,Dickson}. In {\em optimisation},
and notably in the theory of {\em hyperbolic polynomials} \cite{Wagner},
one is particularly interested in the case where $K=\RR$, $A_0$ is
symmetric positive definite, and the $A_i$ are symmetric. In this case,
the restriction of $p$ to any line through 0 has only real roots. For
$n=2$ the converse also holds \cite{HeltonVinnikov,LewisParriloRamana};
for counterexamples to this converse holding for higher $n$, see \cite{Branden}.
In {\em complexity theory} a central role is played by Valiant's
conjecture that the permanent of an $m \times m$-matrix does not admit a
determinantal representation of size polynomial in $m$ \cite{Valiant}. Via
the {\em geometric complexity theory} programme \cite{MulmuleySohoniI}
this leads to the study of polynomials in the boundary of the orbit of
the $N \times N$-determinant under the action of the group $\GL_{N^2}(K)$
permuting matrix entries. Recent developments in this field include the study of
this boundary for $N=3$ \cite{HuttenhainLairez} and the exciting negative
result in \cite{BurgisserIkenmeyerPanova} that Valiant's conjecture can
{\em not} be proved using occurrence obstructions proposed earlier in
\cite{MulmuleySohoniII}.

Our motivation comes from {\em scientific computing}, where determinantal
representations of polynomials have recently been proposed for efficiently
solving systems of equations \cite{BorMichiel}. For this application, it is
crucial to have determinantal representations not of a {\em single}
polynomial $p$, but rather of all $n$-variate polynomials of degree
at most $d$. Moreover, the representation should be easily computable
from the coefficients of $p$. Specifically, in \cite{BorMichiel} determinantal
representations are constructed for the bivariate case ($n=2$) in which
the entries of the matrices $A_0,\ldots,A_n$ themselves {\em depend
affine-linearly} on the coefficients $c_\alpha$. This is what we call a
{\em uniform determinantal representation} of the generic polynomial $p$
of degree $d$ in $n$ variables; see Section~\ref{sec:Problem} for a
precise definition.

\begin{ex}[The binary quadric] \label{ex:BinaryQuadric}
{\rm
The identity
\[ c_{00} + c_{10}x + c_{01}y + c_{20} x^2 + c_{11} xy +
c_{02} y^2= \det
\begin{bmatrix}
-x & 1 & 0 \\
-y & 0 & 1 \\
c_{00} & c_{10}+c_{20}x+c_{11}y & c_{01}+c_{02}y\\
\end{bmatrix}
\]
exhibits the matrix on the right as a uniform determinantal representation
of the generic bivariate quadric.
\hfill $\clubsuit$
}
\end{ex}

In applications, the matrix $M$ is used as input to algorithms
in numerical linear algebra that scale unfavourably with $N$,
such as a complexity of $O(N^6)$.
Consequently, we are led to consider the following fundamental question.

\begin{que}
What is the minimal size $N^*(n,d)$ of any uniform determinantal
representation of the generic polynomial of degree $d$ in $n$ variables?
\end{que}

A construction from \cite{BorMichiel} shows that
for fixed $n=2$ and $d \to \infty$ we have
$N^*(2,d) \le \frac{1}{4} \, d^2 + O(d)$;
this construction is reviewed in Section~\ref{sec:First}.
We improve the construction from \cite{BorMichiel} by giving a particularly elegant uniform
determinantal representations of bivariate polynomials of size $2d+1$ in
Example~\ref{ex:2d}, and of size $2d-1$ in Example~\ref{ex:2d1}.
In view of the obvious lower bound of $d$ this is
clearly sharp up to a constant factor for $d \to \infty$, although we do
not know where in the interval $[d,2d-1]$ the true answer lies.
We show in Section~\ref{sec:Numerics} how to use these small determinantal
representations of bivariate polynomials for solving systems of equations.
Before that, we focus on the asymptotic behaviour of $N^*(n,d)$
for fixed $n$ and $d \to \infty$. In this setting, we derive the
following result.

\begin{thm} \label{thm:Main}
For fixed $n \in \ZZ_{\ge 2}$ there exist positive constants $C_1,C_2$
(depending on $n$) such that for each $d \in \ZZ_{\ge 0}$ the smallest
size $N^*(n,d)$ of a uniform determinantal representation of the generic
polynomial of degree $d$ in $n$ variables satisfies
$C_1 d^{n/2} \le N^*(n,d) \le C_2 d^{n/2}$.
\end{thm}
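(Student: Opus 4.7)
The plan is to prove the two inequalities separately.

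For the lower bound $N^*(n,d) \ge C_1 d^{n/2}$, I would combine Jacobi's formula with a dimension count. Setting $c = 0$ makes the target polynomial vanish, so the matrix $M_0 := M|_{c=0}$ has $\det M_0 \equiv 0$ as a polynomial in $x$; equivalently, $M_0$ belongs to a linear space of always-singular matrices, tying the problem to the theme of the abstract. Differentiating the identity $\det M = \sum_\alpha c_\alpha x^\alpha$ with respect to $c_\alpha$ at $c = 0$ and applying Jacobi's formula gives
\[
x^\alpha \;=\; \operatorname{tr}\!\bigl(\operatorname{adj}(M_0)\cdot V_\alpha\bigr), \qquad V_\alpha := \frac{\partial M}{\partial c_\alpha}\bigg|_{c=0},
\]
where each $V_\alpha$ is itself an $N \times N$ matrix with entries affine-linear in $x_1, \dots, x_n$. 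All $V_\alpha$ therefore lie in the fixed vector space of $N\times N$ matrices of affine-linear forms in $x$, which has dimension $(n+1)N^2$. Under the linear map $V \mapsto \operatorname{tr}(\operatorname{adj}(M_0)\,V)$ these matrices surject onto the $\binom{n+d}{n} = \Theta(d^n)$ linearly independent monomials $\{x^\alpha : |\alpha| \le d\}$, so the $V_\alpha$ must themselves span a subspace of dimension at least $\binom{n+d}{n}$. Hence $(n+1)N^2 \ge \binom{n+d}{n}$, which rearranges to $N \ge C_1 d^{n/2}$ for some $C_1 = C_1(n) > 0$.

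For the upper bound $N^*(n,d) \le C_2 d^{n/2}$, I would exhibit an explicit construction generalizing Example~\ref{ex:BinaryQuadric}. The combinatorial key is to choose sets of monomials $S$ and $T$ in $x_1, \dots, x_n$ with $|S|, |T| = O(d^{n/2})$ such that every monomial of total degree $\le d$ factors as $x^\sigma x^\tau$ with $x^\sigma \in S$ and $x^\tau \in T$. For even $n = 2m$ the natural choice works: let $S$ be all monomials in $x_1,\dots,x_m$ of degree $\le d$ and $T$ the analogue in $x_{m+1},\dots,x_n$, so that $|S| = |T| = \binom{m+d}{m} = O(d^{n/2})$ and each set is downward closed. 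For such a downward-closed $S$ there is a standard $|S|\times|S|$ lower-triangular matrix $B_S$ with $1$'s on the diagonal and off-diagonal entries $-x_i$ encoding $\mu_{\text{child}} = x_i\,\mu_{\text{parent}}$; its determinant equals $1$ and the first column of $B_S^{-1}$ is the list $\mathbf{s}$ of monomials in $S$ (similarly $\mathbf{t}$ for $T$). Writing the generic polynomial as a bilinear pairing $\mathbf{s}^{\top} C(c)\,\mathbf{t}$, where the $|S|\times|T|$ matrix $C(c)$ is affine-linear in $c$ and distributes each coefficient $c_\alpha$ over the pairs $(\sigma,\tau)$ with $\sigma + \tau = \alpha$, and assembling the ingredients into a bordered block matrix
\[
M \;=\; \begin{pmatrix} B_S^{\top} & C(c) & 0 \\ 0 & B_T & e_1 \\ e_1^{\top} & 0 & 0 \end{pmatrix}
\]
of size $|S| + |T| + 1 = O(d^{n/2})$, a Schur-complement calculation identifies $\det M$ with $\mathbf{s}^{\top} C(c)\,\mathbf{t} = \sum_\alpha c_\alpha x^\alpha$.

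The main obstacle is the case of odd $n$, where the naive split into $\lfloor n/2\rfloor$ and $\lceil n/2\rceil$ variables produces a representation of size $O(d^{(n+1)/2})$, off by a factor of $\sqrt{d}$. To reach $O(d^{n/2})$ I would share the exponent of one ``middle'' variable additively between $S$ and $T$ via a one-dimensional basis $A \subseteq \{0, 1, \dots, d\}$ with $A + A \supseteq \{0, 1, \dots, d\}$ and $|A| = O(\sqrt{d})$. This destroys the downward-closedness of $S$ and $T$ and forces me to replace the nice lower-triangular $B_S, B_T$ by sparser, non-triangular companion-like analogues; verifying that the resulting Schur-complement identity still produces $\sum_\alpha c_\alpha x^\alpha$ in this more flexible setting is the key technical hurdle. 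Once that is in hand, the block sizes combine as $O(d^{(n-1)/2}) \cdot O(\sqrt{d}) = O(d^{n/2})$, completing the proof.
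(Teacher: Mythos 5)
Your lower bound is correct and is essentially the paper's argument in a different guise: the map $V\mapsto\operatorname{tr}(\operatorname{adj}(M_0)\,V)$ is exactly the cofactor sum $\sum_{i,j}(-1)^{i+j}\ell_{ij}D_{ij}$ that the paper uses (via Lemma~\ref{lm:Dets}), and the inequality $(n+1)N^2\ge\binom{n+d}{n}$ you derive is precisely the version the paper itself calls ``a bit less careful than strictly needed'' in the remark following the proof; the paper sharpens it to $N^2n+N\ge\dim F_d$ by exhibiting an $N(N-1)$-dimensional kernel, but both give $N\ge C_1 d^{n/2}$. Your even-$n$ construction is likewise the paper's even-$n$ construction, wrapped in a bordered block matrix rather than the slightly smaller $\bigl[\begin{smallmatrix}M_V & 0\\ L & M_W^T\end{smallmatrix}\bigr]$ of Proposition~\ref{prop:Cons2}; the asymptotics are unaffected.

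For odd $n$, however, there is a genuine gap. You propose to share the middle exponent via a set $A\subseteq\{0,\ldots,d\}$ with $A+A\supseteq\{0,\ldots,d\}$ and $|A|=O(\sqrt d)$, and you note this destroys downward-closedness, planning to compensate with ``sparser, non-triangular companion-like analogues'' of $B_S,B_T$. That plan does not work as stated: if the exponent set of $S$ is not connected to~$0$, no rectangular matrix of \emph{affine-linear} forms has maximal minors spanning the span of $S$ (degree alone rules this out: e.g.\ there is no $1\times 2$ affine-linear matrix whose $1\times1$ minors span $\langle 1,x^2\rangle$). The correct repair is not a cleverer matrix but an \emph{augmentation} of $S$: one must insert extra monomials to restore connectedness to~$0$, and then the real technical work is verifying that the augmented set still has $O(d^{n/2})$ elements. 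This is exactly the nontrivial part of the paper's odd-$n$ (in fact, all $n>2$) argument, which uses a different decomposition altogether --- each coordinate, not just one middle variable, is split via the ``even-bit'' and ``odd-bit'' sets $B_0,B_1\subseteq\ZZ_{\ge0}$ with $B_0+B_1=\ZZ_{\ge0}$, and the augmentation cost is bounded by the convergent series $\sum_m 2^{(2-n)m+i}$. Your one-middle-variable route can in fact be salvaged (pad with all pure powers $x_{m+1}^j$ for $j\le d$, an $O(d)$ overhead, which is $O(d^{n/2})$ once $n\ge3$), but you neither provide that padding nor the accompanying size analysis, and you instead point at a fix that cannot succeed. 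You should either carry out the padding argument explicitly or switch to the paper's symmetric binary-split construction.
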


We will also compare our results with previous constructions,
most notably with those by Quarez \cite[Thm.~4.4]{Quarez}, who proves the
existence of a symmetric
representation of size ${n+\lfloor \frac{d}{2} \rfloor} \choose n$.
For fixed $n$ and $d \to \infty$, \cite{Quarez} therefore has the asymptotic
rate $\sim d^n$, meaning that the results of this paper represent a clear improvement.
For fixed $d$ and $n \to \infty$, \cite{Quarez} leads to the asymptotic behavior
$\sim n^{\lfloor d/2 \rfloor}$, which is similar to our bounds;
we will discuss more details in Section~\ref{sec:Outlook}.

In Section~\ref{sec:Problem} we formalise the notion of uniform
determinantal representations, study their symmetries, and
derive some simple properties. In particular, we relate uniform
determinantal representations to spaces of singular $N \times N$-matrices. In
Section~\ref{sec:Singular} we briefly review some of the existing literature on
these singular spaces, and we prove that for $N>4$ there are
infinitely many equivalence classes of such objects; this poses an
obstruction to a ``brute-force'' approach towards finding lower bounds
on $N^*(n,d)$. In Section~\ref{sec:First} we present a first
construction, of which however the size is of the order of $d^n$,
rather than $d^{n/2}$, for $d \to \infty$.
In Section~\ref{sec:Second} we give a more efficient construction
and prove Theorem~\ref{thm:Main}.
In Section~\ref{sec:Small} we give upper bounds on $N^*(n,d)$
for small $n$ and $d$ and
determine $N^*(2,2)$ and $N^*(3,2)$ exactly. We extend representations
from scalar to matrix polynomials in Section~\ref{sec:MatPol}.
In Section~\ref{sec:Numerics} we give some numerical results that
show that for $n=2$ and small $d$ we get a competitive method
for computing zeros of polynomials systems.
Finally, in Section~\ref{sec:Outlook} we summarise our main conclusions
and collect some questions that arise naturally from our work.

\section{Problem formulation and symmetries} \label{sec:Problem}

In this section we give a formal definition of uniform determinantal
representations, and introduce a group that acts on such representations.
We also show that a uniform determinantal representation gives rise to
a vector space consisting entirely of singular matrices; such spaces
are the topic of next section.

Let $K$ be a field and fix $d,n \in \ZZ_{\ge 0}$. Let $F_d$ denote
the polynomials of degree at most $d$ in the polynomial ring $K[x_1,\ldots,x_n]$.
Furthermore, let $p_{n,d}$ be the generic polynomial of that degree, i.e.,
\begin{equation}\label{eq:polnd}
p_{n,d}=\sum_{|\alpha| \le d} c_\alpha x^\alpha,
\end{equation}
where $x:=(x_1,\ldots,x_n)$, $\alpha \in \ZZ_{\ge 0}^n$,
$|\alpha|:=\sum_i \alpha_i$, $x^{\alpha}:=\prod_i x_i^{\alpha_i}$,
and where we consider $c_\alpha$ as a variable for each $\alpha$.

\begin{de}
For $n,d \in \ZZ_{\ge 0}$, a {\em uniform determinantal representation}
of $p_{n,d}$ is an $N \times N$-matrix $M$ with entries from
$K[(x_1,\ldots,x_n),(c_\alpha)_{|\alpha| \le d}]$, of degree at most 1
in each of these two sets of variables, such that $\det(M)=p_{n,d}$. The
number $N$ is called the {\em size} of the determinantal representation.
\end{de}

To be explicit, we require each entry of $M$ to be a $K$-linear
combination of the monomials $1,x_i,c_\alpha,c_\alpha x_i,\
(i=1,\ldots,n, |\alpha| \le d)$. This means that we can decompose $M$
as $M_0+M_1$, where $M_0$ contains all terms in $M$ that do not
contain any $c_\alpha$, and where $M_1$ contains all terms in $M$ that
do. We will use the notation $M=M_0+M_1$ throughout the paper. When $n$
and $d$ are fixed in the context, we will also speak of a
uniform determinantal representation without reference to $p_{n,d}$. Our
ultimate aim is to determine the following quantity.

\begin{de}
For $n,d \in \ZZ_{\ge 0}$, $N^*(n,d) \in \ZZ_{>0}$ is the minimum among
all sizes of uniform determinantal representations of $p_{n,d}$.
\end{de}

This minimal size could potentially depend on the ground field $K$,
but the bounds that we will prove do not. Note that in the definition
of $N^*(n,d)$ we do not allow terms in $M$ of degree strictly larger
than one in the $c_\alpha$. Relaxing this condition to polynomial
dependence on the $c_\alpha$ might affect the exact value of $N^*$,
but it will not affect our bounds---see Remark~\ref{re:Poly}.

Given a uniform determinantal representation $M$ of size $N$, and given
matrices $g,h$ in $\SL_N(K)$, the group of determinant-one matrices
with entries in $K$,
the matrix $gMh^{-1}$ is another uniform determinantal representation
of $p_{n,d}$.  In this manner, the group $\lieg{SL}_N(K) \times
\lieg{SL}_N(K)$ acts on the set of uniform determinantal representations
of $p_{n,d}$. Moreover, there exist further symmetries, arising from affine
transformations of the $n$-space. Recall that these transformations form the
group $\AGL_n(K)=\GL_n(K) \ltimes K^n$ generated by invertible linear
transformations and translations.

\begin{lm} \label{lm:Affine}
The group $\AGL_n(K)$ acts on uniform determinantal representations of $p_{n,d}$.
\end{lm}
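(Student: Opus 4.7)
The plan is to produce the action explicitly: an affine map $\phi \in \AGL_n(K)$ should act on representations by simultaneously substituting $x \mapsto \phi^{-1}(x)$ in the $x$-variables and applying an appropriate companion linear transformation on the coefficient variables $(c_\alpha)_{|\alpha|\le d}$.

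First I would set up the companion action on coefficients. Because $\phi^{-1}$ is an affine-linear map in $x$, the polynomial $p_{n,d}(\phi^{-1}(x),c) = \sum_{\alpha} c_\alpha\,\phi^{-1}(x)^{\alpha}$ again has total degree at most $d$ in $x$, and its coefficient in front of each monomial $x^{\beta}$ is a $K$-linear combination of the $c_\alpha$. Collecting these coefficients defines a $K$-linear map $\phi_\sharp$ on the space spanned by the $(c_\alpha)$, characterised by the identity $p_{n,d}(\phi^{-1}(x),c)=p_{n,d}(x,\phi_\sharp c)$. A two-line calculation using $(\phi\psi)^{-1}=\psi^{-1}\phi^{-1}$ shows $\phi\mapsto\phi_\sharp$ is a (left) $\AGL_n(K)$-action, so in particular $\phi_\sharp$ is invertible.

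Next, for a uniform determinantal representation $M=M(x,c)$ of size $N$, I define
\[
(\phi\cdot M)(x,c) \;:=\; M\bigl(\phi^{-1}(x),\,\phi_\sharp^{-1}c\bigr).
\]
Three things need to be checked. (i) The determinant identity: by definition of the determinant and of $\phi_\sharp$,
\[
\det\bigl(\phi\cdot M\bigr)(x,c)=p_{n,d}\bigl(\phi^{-1}(x),\phi_\sharp^{-1}c\bigr)=p_{n,d}\bigl(x,\phi_\sharp\phi_\sharp^{-1}c\bigr)=p_{n,d}(x,c).
\]
(ii) The ``uniform'' shape is preserved: each entry of $M$ is a $K$-linear combination of the monomials $1,\,x_i,\,c_\alpha,\,x_ic_\alpha$. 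Substituting an affine-linear function of $x$ for $x_i$ and a $K$-linear function of $c$ for $c_\alpha$ produces again a polynomial of degree at most $1$ in the $x$-variables and of degree exactly $1$ in the $c$-variables (no higher powers can appear, because each original entry is multilinear in the two variable blocks). Hence $\phi\cdot M$ is an $N\times N$ matrix of the required shape. (iii) The action axiom $(\phi\psi)\cdot M = \phi\cdot(\psi\cdot M)$ follows directly by substituting and using that $\phi\mapsto\phi_\sharp$ and $\phi\mapsto\phi^{-1}$ are both (left) actions.

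There is no real obstacle; the only mildly subtle point is booking the companion transformation $\phi_\sharp$ so that the substitution $x\mapsto\phi^{-1}(x)$ is exactly cancelled on the determinant side, while simultaneously keeping the $c$-degrees at most one. One might note in passing that when $M$ is written as $M_0+M_1$ as in the definition, the decomposition is preserved by the action because the $c$-independent part $M_0$ is sent to $M_0(\phi^{-1}(x))$ and the $c$-linear part $M_1$ is sent to a matrix still linear in the (transformed) $c_\alpha$'s, so the structure discussed after the definition is respected.
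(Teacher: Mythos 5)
Your proof is correct and takes essentially the same approach as the paper: you define the companion linear action $\phi_\sharp$ on the coefficients (which is precisely the paper's $\rho(g)$), then act on a representation $M$ by substituting $\phi^{-1}$ in $x$ and $\phi_\sharp^{-1}$ in $c$, and verify that the determinant identity is preserved. The only additions are your explicit checks that the bi-affine-linear shape is preserved and that the group axiom holds, which the paper leaves implicit.
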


The statement of this lemma is empty without making the action
explicit, as we do in the proof.

\begin{proof}
Let $g \in \AGL_n(K)$ be an affine transformation of $K^n$, and expand
\[ p_{n,d}(g^{-1}x,c)=\sum_{|\alpha| \le d} c'_\alpha x^\alpha, \]
where the $c'_\alpha$ are linear combinations of the $c_\alpha$. More
precisely, the vector $c'$ can be written as $\rho(g)c$, where $\rho$
is the representation of $\AGL_n(K)$ on polynomials of degree at most $d$
regarded as a matrix representation relative to the monomial basis.

Now let $M=M(x,c)$ be a uniform determinantal representation of
$p_{n,d}$. Then
\[
\det(M(g^{-1}x,\rho(g)^{-1}c))=p_{n,d}(g^{-1}x,\rho(g)^{-1}c)=p_{n,d}(x,c),
\]
i.e., $M(g^{-1}x,\rho(g)^{-1}c)$ is another uniform determinantal
representation of $p_{n,d}$. The action of $g$ is given by $M \mapsto
M(g^{-1}x,\rho(g)^{-1}c)$.
\end{proof}

\begin{ex}[The binary quadric revisited]
{\rm
Take $n=d=2$ and the affine transformation
$g(x,y):=(y,x+1)$ with inverse $g^{-1}(x,y)=(y-1,x)$. We have
\[ p_{2,2}(g^{-1}(x,y))=
(c_{00} - c_{10} + c_{20})
+ (c_{01} - c_{11})x
+ (c_{10} - 2 c_{20})y
+ c_{02} x^2
+ c_{11} xy
+ c_{20} y^2.
\]
We find
{\footnotesize
\[
c'=
\begin{bmatrix}
c'_{00}\\c'_{10}\\c'_{01}\\c'_{20}\\c'_{11}\\c'_{02}
\end{bmatrix}
=
\begin{bmatrix}
1 & -1 & 0 & 1 & 0 & 0\\
0 & 0  & 1 & 0 &-1 & 0\\
0 & 1  & 0 &-2 & 0 & 0\\
0 & 0  & 0 & 0 & 0 & 1\\
0 & 0  & 0 & 0 & 1 & 0\\
0 & 0  & 0 & 1 & 0 & 0
\end{bmatrix}
\begin{bmatrix} c_{00}\\c_{10}\\c_{01}\\c_{20}\\c_{11}\\c_{02}
\end{bmatrix}
=\rho(g)c; \text{ }
\rho(g)^{-1}=
\begin{bmatrix}
1& 0& 1& 0& 0& 1\\
0& 0& 1& 0& 0& 2\\
0& 1& 0& 0& 1& 0\\
0& 0& 0& 0& 0& 1\\
0& 0& 0& 0& 1& 0\\
0& 0& 0& 1& 0& 0
\end{bmatrix}.
\]
}%
If we make the substitutions
\begin{align*}
c_{00} &\mapsto c_{00}+c_{01}+c_{02} &
c_{10} &\mapsto c_{01}+2c_{02} &
c_{01} &\mapsto c_{10}+c_{11} &
x &\mapsto y-1\\
c_{20} &\mapsto c_{02} &
c_{11} &\mapsto c_{11} &
c_{02} &\mapsto c_{20} &
y &\mapsto x
\end{align*}
in the uniform determinantal representation of
Example~\ref{ex:BinaryQuadric}, then we arrive at the matrix
\[
\begin{bmatrix}
1 - y& 1& 0\\
-x& 0& 1\\
c_{00} + c_{01} + c_{02}& c_{01} + c_{02} + c_{02}y + c_{11}x& c_{10}
+ c_{11} + c_{20}x
\end{bmatrix}
\]
whose determinant also equals $p_{2,2}$.
\hfill $\clubsuit$
}
\end{ex}

The action of the affine group will be used in Section~\ref{sec:Small}
to determine the exact value of $N^*(n,2)$ for $n=2$ and $3$. We now turn our
attention to the component $M_0$ of a uniform determinantal representation $M$.

\begin{lm} \label{lm:SingSpace}
For any uniform determinantal representation $M=M_0+M_1$ of
size $N$, the determinant of $M_0$ is the zero polynomial in
$K[x_1,\ldots,x_n]$. Moreover, at every point $\bar{x} \in K^n$, the rank
of the specialisation $M_0(\bar{x}) \in K^{N \times N}$ is exactly
$N-1$.
\end{lm}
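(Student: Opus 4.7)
The plan is to exploit the two observations that $M_0$ equals $M$ specialised at $c=0$, and that $M_1$ depends $K$-linearly on the coefficients $c_\alpha$.

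For the first assertion, I would simply substitute $c_\alpha=0$ for every $\alpha$: this turns $M$ into $M_0$ and turns $p_{n,d}=\sum_{|\alpha|\le d}c_\alpha x^\alpha$ into $0$, so that $\det(M_0)=p_{n,d}(x,0)=0$ in $K[x_1,\ldots,x_n]$. In particular $\operatorname{rank}(M_0(\bar{x}))\le N-1$ for every $\bar{x}\in K^n$, so it remains to prove the matching lower bound.

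For that, I would fix $\bar{x}\in K^n$ and set $A:=M_0(\bar{x})\in K^{N\times N}$ and $B:=M_1(\bar{x},c)$, whose entries are $K$-linear forms in the variables $c_\alpha$. The defining identity specialises to
\[
\det(A+B)=p_{n,d}(\bar{x},c)=\sum_{|\alpha|\le d}\bar{x}^\alpha c_\alpha,
\]
which is a nonzero polynomial of degree exactly one in $c$, since the coefficient of $c_\alpha$ for $\alpha=(0,\ldots,0)$ equals $1$. Now assume, for a contradiction, that $\operatorname{rank}(A)\le N-2$. By multilinearity of the determinant column by column,
\[
\det(A+B)=\sum_{S\subseteq\{1,\ldots,N\}}\det(C^S),
\]
where the $j$-th column of $C^S$ is taken from $B$ if $j\in S$ and from $A$ otherwise. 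A Laplace expansion along the $B$-columns writes each $\det(C^S)$ as a signed sum of products of an $(N-|S|)\times(N-|S|)$ minor of $A$ with an $|S|\times|S|$ minor of $B$. The rank hypothesis makes every minor of $A$ of size $N-1$ or $N$ vanish, so all contributions with $|S|\le 1$ are zero, and $\det(A+B)$ has no terms of total degree $\le 1$ in the entries of $B$, hence none in the $c_\alpha$. This contradicts the identity above, forcing $\operatorname{rank}(M_0(\bar{x}))=N-1$.

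The main step requiring care is the Laplace expansion that converts a rank drop of $A$ into a vanishing-order statement for $\det(A+B)$ in the entries of $B$; this is standard linear algebra, but it is the one place where a small amount of bookkeeping is unavoidable.
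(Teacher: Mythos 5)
Your proof is correct and follows essentially the same strategy as the paper: for the first claim both arguments amount to observing that $\det(M_0)$ is the $c$-degree-zero part of $\det(M)=p_{n,d}$, and for the second claim both derive a contradiction from the fact that rank $\le N-2$ would force every term of $\det M(\bar{x})$ to be at least quadratic in the $c_\alpha$, while $p_{n,d}(\bar{x},c)$ is a nonzero linear form in $c$. The only cosmetic difference is that the paper first applies $\SL_N(K)$ column operations to make two columns of $M_0(\bar{x})$ vanish, where you instead expand $\det(A+B)$ directly by column multilinearity and generalized Laplace expansion.
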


\begin{proof}
The first statement follows from the fact that $\det(M_0)$ is the part
of the polynomial $\det(M)$ which is homogeneous of degree zero in the
$c_\alpha$; hence zero.

By specialising the vector $x$ of variables to a point $\bar{x} \in K^n$,
the rank of $M_0$ can only drop, so the rank of $M_0(\bar{x})$  is at most
$N-1$. However, if it were at most $N-2$, then after column operations on
$M$ by means of determinant-one matrices with entries in $K$ we may assume
that $M_0(\bar{x})$ has its last two columns equal to 0. This means that
all entries of $M(\bar{x})=M_0(\bar{x})+M_1(\bar{x})$ in these columns are linear
in the $c_\alpha$. This in turn implies that any term in the polynomial $\det
M(\bar{x})$ is at least quadratic in the $c_\alpha$. But on the other hand
$\det M(\bar{x})$ equals $p_{n,d}(\bar{x})$, which is a non-zero {\em linear}
polynomial in the $c_\alpha$ (nonzero since not every polynomial of
degree at most $d$ vanishes at $\bar{x}$). This contradiction implies that
the rank of $M_0(\bar{x})$ is $N-1$.
\end{proof}

\begin{lm} \label{lm:Dets}
If $M=M_0+M_1$ is a uniform determinantal representation of size
$N$, then $V \subseteq F_{N-1}$ spanned by the $(N-1) \times
(N-1)$-subdeterminants of $M_0$ satisfies $F_1 \cdot V \supseteq F_d$.
\end{lm}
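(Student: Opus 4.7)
The plan is to exploit the bigrading of $\det(M)$ by degree in the coefficient variables $c_\alpha$ on the one hand and in the polynomial variables $x_i$ on the other. Since every entry of $M_0$ has degree zero in the $c_\alpha$ and every entry of $M_1$ has degree one, expanding $\det(M)=\det(M_0+M_1)$ by multilinearity of the determinant in the columns yields
\[
\det(M)=\sum_{S\subseteq\{1,\ldots,N\}}\det(M^S),
\]
where $M^S$ denotes the matrix whose $j$-th column is taken from $M_1$ for $j\in S$ and from $M_0$ otherwise, and each summand is homogeneous of degree $|S|$ in the $c_\alpha$. Since $\det(M)=p_{n,d}$ is homogeneous of degree one in the $c_\alpha$, only the terms with $|S|=1$ can contribute: the $|S|=0$ term vanishes by Lemma~\ref{lm:SingSpace}, and the higher-degree terms must cancel by comparison of $c$-degrees.

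The next step is to expand each relevant term $\det(M^{\{j\}})$ along its $j$-th column, obtaining
\[
p_{n,d}=\sum_{i,j=1}^{N}(-1)^{i+j}(M_1)_{ij}\cdot\det\bigl(M_0^{(i,j)}\bigr),
\]
where $M_0^{(i,j)}$ is the $(N-1)\times(N-1)$-submatrix of $M_0$ obtained by deleting row $i$ and column $j$; note that these minors lie in $F_{N-1}$ and their span is exactly $V$. Each entry of $M_1$ can be written as $(M_1)_{ij}=\sum_{\alpha}c_\alpha\,\ell_{ij,\alpha}(x)$ with $\ell_{ij,\alpha}(x)\in F_1$, since $M_1$ is by definition $K$-linear in the monomials $c_\alpha$ and $c_\alpha x_k$. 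Substituting this expression and collecting the coefficient of each $c_\alpha$ on both sides gives, for every multi-index $\alpha$ with $|\alpha|\le d$,
\[
x^{\alpha}=\sum_{i,j=1}^{N}(-1)^{i+j}\,\ell_{ij,\alpha}(x)\,\det\bigl(M_0^{(i,j)}\bigr)\in F_1\cdot V.
\]
Since the monomials $x^\alpha$ with $|\alpha|\le d$ span $F_d$ as a $K$-vector space, this gives the desired inclusion $F_d\subseteq F_1\cdot V$.

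There is no genuine obstacle here beyond organising the bookkeeping of the column expansion correctly; the only point to be careful about is separating the two roles of the variables, ensuring that the comparison of coefficients of $c_\alpha$ is carried out in the polynomial ring in the $x_i$ alone (with the $c_\alpha$ treated as formal indeterminates). Once this bigraded viewpoint is in place, the statement follows by a direct cofactor expansion.
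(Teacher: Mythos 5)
Your proof is correct and follows essentially the same route as the paper: isolate the part of $\det(M)$ that is homogeneous of degree one in the $c_\alpha$, identify it via cofactor expansion as $\sum_{i,j}(-1)^{i+j}(M_1)_{ij}D_{ij}$, and then observe that each $(M_1)_{ij}$ contributes an $F_1$-multiple of a minor $D_{ij}\in V$. The only cosmetic difference is that the paper finishes by specialising the $c_\alpha$ to the coefficients of an arbitrary $q\in F_d$, whereas you extract the coefficient of each monomial $c_\alpha$ directly to obtain $x^\alpha\in F_1\cdot V$; these are equivalent bookkeeping choices.
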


Here, as in the rest of this paper, by the product of two spaces of
polynomials we mean the $K$-linear span of all the products.

\begin{proof}
Let $D_{ij}$ be the determinant of the submatrix of $M_0$ obtained
by deleting the $i$th row and the $j$th column. On the one hand,
$\det(M)=p_{n,d}$ is linear in the $c_\alpha$ by assumption, and on the
other hand, by expanding $\det(M)$ we see that the part that is homogeneous of
degree one in the $c_\alpha$ is
\[ \sum_{i,j}(-1)^{i+j} (M_1)_{ij} D_{ij};  \]
this therefore equals $p_{n,d}$. Hence any element $q$ of $F_d$
is obtained from the expression above by specialising the variables
$c_\alpha$ to the coefficients of $q$. Since each $(M_1)_{ij}$ is then
specialised to an element of $F_1$, we find $q \in F_1 \cdot V$.
\end{proof}

\begin{re} \label{re:Poly}
{\rm
Note that the proof above still applies if we allow determinantal
representations of the form $M_0+M_1+\cdots+M_e$ where $M_r$ is
homogeneous of degree $r$ in the $c_\alpha$ and affine-linear in the
$x_i$. Since our upper bound in
Section~\ref{sec:Second} builds directly on this lemma, the upper bound
holds in this more general setting, as well.
}
\end{re}

\section{Spaces of singular matrices} \label{sec:Singular}
Let $M=M_0+M_1$ be a uniform determinantal representation of $p_{n,d}$.
Writing $M_0=B_0+\sum_{i=1}^n x_i B_i$, Lemma~\ref{lm:SingSpace} implies
that the linear span $\langle B_0,\ldots,B_n \rangle_K \subseteq K^{N
\times N}$ consists entirely of singular matrices (and indeed that this
remains true when extending scalars from $K$ to an extension field). There
is an extensive literature on such {\em singular matrix spaces}; see,
e.g., \cite{DraismaBLMS,FillmoreLaurieRadjavi} and the references therein.
The easiest examples are the following.

\begin{de}
A subspace $\mathcal{A} \subseteq K^{N \times N}$ is called a {\em
compression space} if there exists a subspace $U \subseteq K^N$ with
$\dim(\langle u^T A \mid A \in \mathcal{A},u \in U\rangle_K)<\dim U$. We call
the space $U$ a {\em witness} for the singularity of $\mathcal{A}$.
\end{de}

Given any two subspaces $U,V \subseteq K^N$ with $\dim V=-1+\dim U$,
the space of all matrices which map $U$ into $V$ (acting on row vectors)
is a compression space with witness $U$. It is easy to see that these
spaces are inclusion-wise maximal among all singular spaces.

If $\mathcal{A}$ is a singular matrix space, then so is $g \mathcal{A}
h^{-1}$ for any pair $(g,h) \in \GL_N(K) \times \GL_N(K)$. We call the latter space {\em
conjugate} to the former.

\begin{ex} \label{ex:SmallSingSpace}
{\rm
For $N=2$, every singular matrix space is a compression space, hence
conjugate to a subspace of one of the two spaces
\[
\left\{\begin{bmatrix} * & * \\ 0 & 0 \end{bmatrix} \right\},
\left\{\begin{bmatrix} 0 & * \\ 0 & *\end{bmatrix}  \right\},
\]
where the $*$s indicate entries that can be filled arbitrarily.
A witness for the first space is the span $\langle e_2 \rangle$ of the
second standard basis vector, and a witness for the second space is $K^2$.

For $N=3$, there are four conjugacy classes of
inclusion-maximal singular matrix spaces, represented by the three
maximal compression spaces
\[
\left\{\begin{bmatrix} * & * & * \\ * & * & * \\ 0 & 0 &
0\end{bmatrix}  \right\},
\left\{\begin{bmatrix} * & * & * \\ 0 & 0 & * \\ 0 & 0 &
*\end{bmatrix}  \right\},
\left\{\begin{bmatrix} 0 & * & * \\ 0 & * & * \\ 0 & * &
*\end{bmatrix}  \right\},
\]
and the space of skew-symmetric $3 \times 3$-matrices \cite{EisenbudHarris}; the latter
is not a compression space.
For $N=4$, there are still finitely many (namely, $10$) conjugacy classes of
inclusion-maximal singular matrix spaces
\cite{EisenbudHarris,FillmoreLaurieRadjavi}, but this is not true
for $N \ge 5$, as Theorem~\ref{thm:Sing5} below shows.
This theorem is presumably folklore; we include a proof since
we have not been able to find a literature reference for it.
\hfill $\clubsuit$
}
\end{ex}

\begin{prop} \label{prop:Max}
Assume that $K$ is algebraically closed.  For any $m$ and $N\in \ZZ_{\ge
0}$ the locus $X_m$ in the Grassmannian $\Gr(m, K^{N \times N})$ of
$m$-dimensional subspaces of $K^{N \times N}$ consisting of all {\em
singular}
subspaces is closed in the Zariski topology. Moreover, the locus $U_m$ in
$X_m$ consisting of all {\em inclusion-wise maximal} singular
subspaces is open inside $X_m$.
\end{prop}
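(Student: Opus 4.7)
The plan is to treat the two assertions separately with standard Grassmannian/incidence techniques. The closedness of $X_m$ I would obtain by realising the singularity condition as the simultaneous vanishing of finitely many regular functions on $\Gr(m, K^{N \times N})$. Given a local basis $A_1, \ldots, A_m$ of the tautological subbundle $\mathcal{S}$ over a standard affine chart, form
\[
D(t) := \det\bigl(t_1 A_1 + \cdots + t_m A_m\bigr),
\]
a homogeneous polynomial of degree $N$ in $t = (t_1,\ldots,t_m)$ whose coefficients are regular functions in the chart parameters. Since $K$ is infinite, the underlying subspace is singular if and only if $D$ is identically zero, i.e.\ all coefficients of $D$ vanish. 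A change of basis $A'_i = \sum_j g_{ij} A_j$ transforms $D$ into $D(g^T t)$, so the zero locus descends to the Grassmannian. Intrinsically, the determinant defines a global section $\delta$ of the vector bundle $\mathrm{Sym}^N \mathcal{S}^*$ on $\Gr(m, K^{N \times N})$, and $X_m = Z(\delta)$ is a closed subvariety.

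For openness of $U_m$ in $X_m$, I would show that the complement $X_m \setminus U_m$ is closed via an incidence-variety argument. Observe first that any subspace of a singular matrix space is itself singular, so $\mathcal{A} \in X_m$ fails to be inclusion-wise maximal precisely when there exists an $(m{+}1)$-dimensional singular subspace $\mathcal{A}' \supseteq \mathcal{A}$ (pick any such $\mathcal{A}'$ inside a strictly larger singular space). Form
\[
I := \bigl\{ (\mathcal{A},\mathcal{A}') \in X_m \times X_{m+1} \ :\ \mathcal{A} \subseteq \mathcal{A}' \bigr\}.
\]
The containment condition cuts out the partial flag variety inside $\Gr(m, K^{N \times N}) \times \Gr(m{+}1, K^{N \times N})$, hence is closed, and $X_m, X_{m+1}$ are closed by the previous step; so $I$ is closed. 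Because Grassmannians are projective, the first projection $\pi_1 \colon I \to X_m$ is a closed map, and by the discussion above $\pi_1(I) = X_m \setminus U_m$. This closes up the complement and yields openness of $U_m$ inside $X_m$.

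There is no serious obstacle here; the proof is essentially formal once one uses that Grassmannians are projective and that zero loci of sections of vector bundles are closed. The one point needing care is that the singularity condition must be verified to descend from a choice of basis to the point $[\mathcal{A}] \in \Gr$; the basis-change identity $D'(t) = D(g^T t)$ (or, equivalently, the tautological-bundle formulation) handles this. Everything else is a routine application of the properness of Grassmannians to pass from the incidence variety $I$ down to the base.
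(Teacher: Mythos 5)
Your proof is correct and follows essentially the same route as the paper: the second part uses the identical incidence variety $Z = \{(\mathcal{A},\mathcal{A}') \in X_m \times X_{m+1} : \mathcal{A} \subseteq \mathcal{A}'\}$ together with properness of the projection, and the first part fills in the standard argument that the paper leaves unstated (realising $X_m$ as the zero locus of the determinant viewed as a section of $\mathrm{Sym}^N \mathcal{S}^*$).
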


\begin{proof}
The first statement is standard. For the second statement, consider the
incidence variety
\[
Z:=\{(\mathcal{A},\mathcal{A'}) \in X_m \times X_{m+1} \mid
\mathcal{A} \subseteq \mathcal{A'} \} \subseteq X_m \times X_{m+1},
\]
which is a closed subvariety of $X_m \times X_{m+1}$.
The projection of $Z$ into $X_m$ is the complement of $U_m$, and it is
closed because $X_{m+1}$ is a projective variety.
\end{proof}

\begin{thm} \label{thm:Sing5}
Assume that $K$ is infinite and of characteristic unequal to two. For
$N \ge 5$ there are infinitely many conjugacy classes of inclusion-wise
maximal singular $N \times N$-matrix spaces.
\end{thm}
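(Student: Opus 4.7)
The plan is a dimension count enabled by Proposition~\ref{prop:Max}. Let $G := \GL_N(K) \times \GL_N(K)$ act on $\Gr(m, K^{N \times N})$ by $(g, h) \cdot \mathcal{A} := g\mathcal{A}h^{-1}$. The subgroup $\{(\lambda I_N, \lambda I_N) : \lambda \in K^\times\}$ lies in the stabiliser of every subspace, so every $G$-orbit has dimension at most $\dim G - 1 = 2N^2 - 1$. Because $K$ is infinite, an irreducible constructible subset of $\Gr(m, K^{N \times N})$ of dimension strictly greater than this bound must meet infinitely many $G$-orbits. Proposition~\ref{prop:Max} identifies $U_m$ as an open subvariety of the closed $X_m \subseteq \Gr(m, K^{N \times N})$, so the theorem reduces to exhibiting, for every $N \ge 5$, an $m$ and an irreducible component of $X_m$ meeting $U_m$ whose dimension exceeds $2N^2 - 1$.

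The core case is $N = 5$, for which the target is $49 = 2 \cdot 25 - 1$. The strategy is to fix a base point $\mathcal{A}_0 \in X_m$ whose singularity admits a transparent proof --- a natural choice is $\mathcal{A}_0 = \mathfrak{so}_5$, the $10$-dimensional space of skew-symmetric $5 \times 5$ matrices, singular because $\det(A) = -\det(A)$ for odd-size skew-symmetric $A$ (the hypothesis $\operatorname{char} K \ne 2$ guarantees both that $\mathfrak{so}_5$ has full dimension $10$ and that this identity forces $\det A = 0$). One then estimates the Zariski tangent space to $X_m$ at $\mathcal{A}_0$ by parametrising nearby subspaces as $\mathcal{A}_0 + \varepsilon \Phi$ for linear maps $\Phi : \mathcal{A}_0 \to \mathcal{A}_0^{\perp}$, writing out $\det(A + \varepsilon \Phi(A)) \equiv 0 \pmod{\varepsilon^2}$ for each $A$ in a basis of $\mathcal{A}_0$, and counting the independent linear conditions imposed on $\Phi$ via a Jacobian-rank calculation. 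For the skew base point this should yield a tangent space of dimension substantially above $49$, thanks to the abundance of rank-drop directions already present inside $\mathfrak{so}_5$.

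For $N > 5$ one extends inductively: an infinite family in $K^{5 \times 5}$ is carried into $K^{N \times N}$ by embedding each $5 \times 5$ block into a fixed ``slot'' of a block decomposition and adjoining enough rigid structure (for instance, a maximal compression space acting on the complementary block) to preserve both singularity and inclusion-wise maximality; one then checks, using the block decomposition as an intrinsic invariant recoverable from the variety of maximal rank-drop directions, that the extended family remains pairwise non-$G$-conjugate. The main obstacle of the proof is the tangent-space estimate in the case $N = 5$: one must verify that the relevant component of $X_m$ has dimension greater than $49$, and, using Proposition~\ref{prop:Max} together with the observation that the finitely many $G$-orbits of maximal singular subspaces of strictly larger dimension each have bounded and explicitly computable dimension, rule out containment of a generic member of the family in such a larger singular space --- so that genericity in the component indeed implies membership in $U_m$.
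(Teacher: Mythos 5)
Your overall framework is the right one, and it is the same one the paper uses: bound the dimension of any $\GL_N\times\GL_N$-orbit by roughly $2N^2$, then exhibit an irreducible locus of maximal singular spaces whose dimension exceeds that bound, invoking Proposition~\ref{prop:Max} to pass from one known maximal example to a generic nearby one. But the core technical step in your proposal contains a genuine gap: you propose to certify a dimension lower bound on a component of $X_m$ by computing a Zariski \emph{tangent space} at the base point $\mathfrak{so}_5$. A tangent-space dimension is an \emph{upper} bound on the local dimension of a variety, not a lower bound; a point where the tangent space is $>49$-dimensional could well sit on a component of dimension $49$ or less. To get the inequality you need, you must either verify smoothness of $X_m$ at the base point (so that tangent and local dimension coincide), or, more robustly, exhibit an explicit algebraic family of distinct maximal singular spaces whose parameter space is large and whose fibres are controlled. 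The paper does the latter: it considers the $N$-dimensional spaces $\cB_A=\{(A_1x|\cdots|A_Nx):x\in K^N\}$ parametrised by $N$-tuples of skew-symmetric matrices $A$, cites \cite{FillmoreLaurieRadjavi} for maximality of one such $\cB_A$, applies Proposition~\ref{prop:Max} to get maximality for generic $A$, and then computes that the generic fibre of $A\mapsto\cB_A$ is one-dimensional, giving an image of dimension $N\binom{N}{2}-1>2(N^2-1)$ for all $N\ge 5$ at once.

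Two further remarks. First, the base point you choose is itself different from the paper's: $\mathfrak{so}_5$ is a $10$-dimensional space of skew matrices, whereas each $\cB_A$ is $N$-dimensional and is not a space of skew matrices (only related to one); the two live in different Grassmannians, so you are estimating a different locus. Second, your inductive passage from $N=5$ to $N>5$ by block embedding is plausible but considerably more delicate than you indicate: proving that the padded spaces remain inclusion-wise \emph{maximal} and that the $5\times5$ block is recoverable as an invariant under $\GL_N\times\GL_N$-conjugacy requires real work, whereas the paper's $\cB_A$ construction and dimension count apply uniformly in $N$ with no induction needed. Replacing the tangent-space step by an explicit parametrised family, and replacing the block induction by a construction that works for all $N$ directly, would bring your argument in line with a complete proof.
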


\begin{proof}
Take $N \ge 5$. For sufficiently general skew-symmetric matrices
$A_1,\ldots,A_N \in K^{N \times N}$ set $A:=(A_1,\ldots,A_N)$ and define
the space
\[ \cB_A:=\{(A_1x|\cdots|A_Nx) \mid x \in K^N\} \subseteq K^{N \times N}. \]
Each matrix in this space is singular, since for $x \neq 0$ we have
\[ x^T(A_1x|\cdots|A_Nx)=(x^TA_1x,\ldots,x^TA_Nx)=0. \]
In \cite{FillmoreLaurieRadjavi} it is proved that, for a specific
choice of the tuple $A$, the
space $\cB_A$ is maximal among the singular subspaces of $K^{N
\times N}$. By
Proposition~\ref{prop:Max}, $\cB_A$ is maximal for sufficiently general
$A$, as well (note that we may first extend $K$ to its algebraic closure
to apply the proposition). In the notation of that proposition,
we have a rational map
\[
\phi:S^N \dashrightarrow U_N,\quad A \mapsto \cB_A,
\]
where $S \subseteq K^{N \times N}$ denotes the subspace of
skew-symmetric matrices; the dashed arrow
indicates that the map is defined only in an open dense
subset of $S^N$. For any nonzero scalar $t$, $\phi(tA)=\phi(A)$.
We claim that, in fact, the general fibre of $\phi$ is indeed
one-dimensional. As the fibre dimension is semicontinuous, it suffices
to verify this at a particular point where $\phi$ is defined. We take
$A_i=E_{i,i+1}-E_{i+1,i}$ for $i=1,\ldots,N-1$ and $A_N$ general;
here $E_{ij}$ is the matrix with zeros everywhere except for a 1
at position $(i,j)$. Let $B \in S^N$; if $\phi(A)=\phi(B)$, then there
exists an invertible matrix $g \in \GL_N(K)$ such that
\[ (A_1 g x|\cdots|A_N g x)=(B_1 x |\cdots|B_N x) \]
for all $x$, so that $A_i g =B_i$. Using skew-symmetry of $A_i$ and
$B_i$, we find that $A_ig=g^T \! A_i$. Substituting our choice of $A_i$
for $i \in \{1,\ldots,N-1\}$ yields $g_{i,j}=g_{j,i}=0$ for all $j$
with $|i-j|>1$, $g_{i,i+1}=-g_{i,i+1}$, so $g_{i,i+1}=0$ since $\cha
K \neq 2$, and $g_{i,i}=g_{i+1,i+1}$. Hence $g$ is a scalar multiple
of the identity. It follows that the fibre of $\phi$ through $A$ is
one-dimensional as claimed.

Since $\dim S=\binom{N}{2}$, we have thus constructed an
$(N\binom{N}{2}-1)$-dimensional family inside $U_N$. Given any point
$\mathcal{A}$ in $U_N$, its orbit under $\GL_N(K) \times \GL_N(K)$ has dimension
at most $2 \, (N^2-1)$ (scalars act trivially). Now for $N=5$ we have
\[
N \, \binom{N}{2}-1=5\cdot 10 -1=49 \quad
\text{ and } \quad 2 \, (N^2-1)=48,
\]
so that we have found (at least) a one-parameter family of conjugacy
classes of singular spaces. For $N>5$ the difference between
$N\binom{N}{2}-1$ and $2 \, (N^2-1)$ is even larger.
\end{proof}

For large $N$ it seems impossible to classify maximal singular matrix
spaces. The construction above already gives an infinite number of
conjugacy classes, but there are many other sources of examples.
For instance, for infinitely many $N$ there exists a maximal singular
matrix space in $K^{N \times N}$ of constant dimension $8$, at least if
we assume that $K$ has characteristic 0 \cite{DraismaBLMS}.  On the other hand, if
the singular matrix space $\mathcal{A}$ has dimension at least $N^2-N$,
then it is a compression space with either a one-dimensional witness
or all of $K^N$ as witness \cite{Dieudonne} (and hence of dimension exactly
$N^2-N$). A sharpening of this result is proved in \cite{FillmoreLaurieRadjavi} (see also
\cite{deSeguinsPazzis}).

It should be noted that in many cases not even the
dimension of such singular matrix spaces is known, for fixed values of the size and rank of the matrices.
There is a considerable body of work devoted to giving
lower and upper bounds for such dimensions, both in the
case of bounded and constant rank, but these bounds are rarely sharp, see,
among many other references, \cite{Flanders, IlicLandsberg, Sylvester, Westwick}
and the more recent works on skew-symmetric matrices of constant rank
\cite{BoraleviFaenziMezzetti,ManivelMezzetti}.

Hence the fact that $M_0$ represents a singular matrix space of
dimension (at most) $n+1$ does not much narrow down our search for good
uniform determinantal representations, except in small cases discussed
in Section~\ref{sec:Small}. However, for our constructions in
Sections~\ref{sec:First} and~\ref{sec:Second} we will only use compression
spaces where the witness has dimension 1 or about $\frac{1}{2} N$, respectively;
and our lower bounds on $N^*(n,d)$ are independent of the literature on
singular matrix spaces.

\section{A first construction} \label{sec:First}

In this section we restrict our attention to determinantal
representations $M=M_0+M_1$ where $M_0$ represents a compression space
with a one-dimensional witness (or, dually by transposition, with a
full-dimensional witness). Under this assumption we will prove quite
tight bounds on the minimal size of a uniform matrix representation. The
following fundamental notion will be used throughout below.

\begin{de}
We say that a subspace $V \subseteq K[x_1,\ldots,x_n]$ is
{\em connected to $1$}
if it is nonzero and its intersections $V_e:=V \cap F_e$ satisfy
$F_1 \cdot V_e \supseteq V_{e+1}$ for each $e \ge 0$.
\end{de}

Note that this implies that $V_0=\langle 1 \rangle$. We borrow the
terminology from the theory of border bases \cite{LLMRT}, where a set
$S$ of monomials is called connected to $1$ if $1 \in S$ and each
nonconstant monomial in $S$ can be divided by some variable to obtain
another monomial in $S$. The linear span of $S$ is then
connected to $1$
in our sense. Translating monomials to their exponent vectors, we will
call a subset $S$ of $\ZZ_{\ge 0}^n$ {\em connected to
$0$} if it contains
0 and for each $\alpha \in S \setminus \{0\}$ there exists an $i$
such that $\alpha-e_i \in S$, where $e_i$ is the $i$-th
standard basis vector.

Let $V$ be a finite-dimensional subspace of $K[x_1,\ldots,x_n]$ connected
to 1.  Choose a $K$-basis $f_1,\ldots,f_m$ of $V$ whose total degrees
increase weakly.  For each $i=2,\ldots,m$ write
\[ f_i=\sum_{j<i} \ell_{ij} f_j \]
for suitable elements $\ell_{ij} \in F_1$. Let $M_V$ be the $(m-1)
\times m$-matrix whose $i$th row equals
\[ (-\ell_{i1},-\ell_{i2},\ldots,-\ell_{i,i-1},1,0,\ldots,0). \]
Note that $M_V$ depends on the choice of basis, but we suppress this
dependence in the notation, since the property of $M_V$ in the next
lemma does not depend on the choice of basis.

\begin{lm} \label{lm:Block}
The $K$-linear subspace of $K[x_1,\ldots,x_n]$ spanned by
the $(m-1) \times (m-1)$-subdeterminants of $M_V$ equals $V$.
\end{lm}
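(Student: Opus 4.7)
The plan is to identify the row relations that define $M_V$ with a kernel relation, and then read off the subdeterminants via the standard cofactor/Cramer description of the kernel of a full-rank rectangular matrix.

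First I would observe that the defining equations $f_i=\sum_{j<i}\ell_{ij}f_j$ for $i=2,\dots,m$ are precisely the statement $M_V\cdot (f_1,\dots,f_m)^T=0$. Thus the column vector $(f_1,\dots,f_m)^T$ lies in the kernel of $M_V$, viewed as a matrix with entries in the field of fractions $L=K(x_1,\dots,x_n)$. Next, by inspection the submatrix of $M_V$ obtained by deleting the first column is lower triangular with $1$'s on the diagonal, so its determinant equals $1$. In particular, $M_V$ has rank exactly $m-1$ over $L$, and its kernel is one-dimensional.

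Now I would invoke the standard fact (a cofactor expansion of the $m\times m$ determinant obtained by repeating any row of $M_V$) that the kernel of a full-rank $(m-1)\times m$ matrix $A$ is spanned by the vector
\[
w := \bigl((-1)^{j+1}\det A^{(j)}\bigr)_{j=1}^{m},
\]
where $A^{(j)}$ is $A$ with the $j$-th column removed. Since both $w$ and $(f_1,\dots,f_m)^T$ lie in the one-dimensional kernel of $M_V$ over $L$, they are $L$-proportional. Using that $V$ is connected to $1$ so that $V_0=\langle 1\rangle$, I would take the basis to satisfy $f_1=1$; then, comparing the first coordinates and using $\det M_V^{(1)}=1$, the proportionality constant is $1$, so
\[
f_j=(-1)^{j+1}\det M_V^{(j)}\qquad(j=1,\dots,m).
\]
This identifies each basis vector of $V$, up to sign, with an $(m-1)\times(m-1)$-subdeterminant of $M_V$. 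Since there are only $m$ such subdeterminants, their $K$-span equals $\langle f_1,\dots,f_m\rangle_K=V$, which is what we want.

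I do not anticipate a real obstacle: the cofactor identity is elementary, and the triangular block of $M_V$ provides both the rank certificate and the normalisation needed to pin down the scalar. The only pitfall is keeping the signs straight and being explicit about the one-dimensionality of the kernel; everything else is bookkeeping.
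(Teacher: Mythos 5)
Your proof is correct and follows essentially the same route as the paper's: both identify $(f_1,\dots,f_m)^T$ as a kernel vector of $M_V$ over $K(x_1,\dots,x_n)$, invoke the Cramer-type description of the one-dimensional kernel in terms of signed maximal subdeterminants, and use the triangular structure of the submatrix obtained by deleting the first column to normalise the proportionality constant via $f_1=1=D_1$. No gap to report.
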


\begin{proof}
By construction, $M_V$ has rank $m-1$ over the field $K(x_1,\ldots,x_n)$
and satisfies $M_V \cdot (f_1,\ldots,f_m)^T=0$. By (a version
of) Cramer's rule, the kernel of $M_V$ is also spanned by
$(D_1,-D_2,\ldots,(-1)^{m-1}D_m)$ where $D_j$ is the determinant of the
submatrix of $M_V$ obtained by removing the $j$th column.  So these two
vectors differ by a factor in $K(x_1,\ldots,x_n)$.  Since $D_1=1=f_1$
we find that they are, in fact, equal. Hence $\langle D_1,\ldots,D_m
\rangle=V$ as claimed.
\end{proof}

We can now formulate our first general construction. This
generalises a construction from \cite{BorMichiel} to the multivariate case.

\begin{prop} \label{prop:Cons1}
Let $V$ be an $m$-dimensional space connected to $1$ and suppose that
$F_1 \cdot V \supseteq F_d$.  Then there exists a uniform determinantal
representation of size $m$ for the generic polynomial of degree $d$
in $n$ variables.
\end{prop}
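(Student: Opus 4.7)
The plan is to build $M$ of size $m \times m$ by taking the $(m-1) \times m$ matrix $M_V$ from Lemma~\ref{lm:Block} and appending one additional row that encodes the coefficients of $p_{n,d}$. Explicitly, let the first $m-1$ rows of $M$ be the rows of $M_V$, and let the last row be a new row $(r_1,\ldots,r_m)$ to be chosen below. Since the entries of $M_V$ lie in $F_1$ and do not involve the $c_\alpha$, placing them in $M_0$ and setting the last row of $M_0$ to zero respects the decomposition $M=M_0+M_1$.

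The key input is Lemma~\ref{lm:Block}. By the argument in its proof, the $(m-1)\times(m-1)$ minors $D_j$ of $M_V$ (obtained by deleting the $j$th column) satisfy $D_j=(-1)^{j-1} f_j$, where $f_1,\ldots,f_m$ is the chosen basis of $V$ with weakly increasing degrees. Therefore, expanding the determinant of $M$ along its last row gives
\[
\det M \;=\; \sum_{j=1}^m (-1)^{m+j} r_j D_j \;=\; (-1)^{m-1}\sum_{j=1}^m r_j f_j.
\]

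Next I would use the hypothesis $F_1\cdot V\supseteq F_d$ to choose, once and for all, for each multi-index $\alpha$ with $|\alpha|\le d$, affine-linear forms $\ell_{\alpha,1}(x),\ldots,\ell_{\alpha,m}(x)\in F_1$ such that
\[
x^\alpha \;=\; \sum_{j=1}^m \ell_{\alpha,j}(x)\,f_j(x).
\]
Summing this expression against the variable coefficients $c_\alpha$ and comparing with the display above suggests defining
\[
r_j \;:=\; (-1)^{m-1}\sum_{|\alpha|\le d} c_\alpha\,\ell_{\alpha,j}(x).
\]
Each $r_j$ is linear in the $c_\alpha$ (each term involves exactly one $c_\alpha$) and affine-linear in $x$ (since $\ell_{\alpha,j}\in F_1$), so the entries of $M_1$ meet the bidegree requirement in the definition of a uniform determinantal representation. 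Substituting this choice into the expansion yields
\[
\det M \;=\; \sum_{|\alpha|\le d} c_\alpha \sum_{j=1}^m \ell_{\alpha,j}(x)\,f_j(x) \;=\; \sum_{|\alpha|\le d} c_\alpha\, x^\alpha \;=\; p_{n,d}.
\]

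There is essentially no obstacle once the construction is guessed: the only mild subtleties are the sign computation above (absorbed into the definition of $r_j$) and verifying that the degree conditions on the $r_j$ hold, both of which are routine. The real content was pushed into Lemma~\ref{lm:Block}, which ensures that the minors of $M_V$ literally reproduce the basis of $V$, and into the hypothesis $F_1\cdot V\supseteq F_d$, which is exactly what is needed to realize every monomial $x^\alpha$ as an $F_1$-combination of those minors.
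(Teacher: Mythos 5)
Your proof is correct and follows essentially the same approach as the paper's: both append a single new row of bi-affine forms to the $(m-1)\times m$ block $M_V$ and expand the determinant along that row, using the fact that the maximal minors of $M_V$ span $V$ together with the hypothesis $F_1\cdot V\supseteq F_d$. The only differences are cosmetic (you append at the bottom and expand along the last row, the paper appends at the top; you make the identity $D_j=(-1)^{j-1}f_j$ from the proof of Lemma~\ref{lm:Block} explicit, while the paper folds the signs into the choice of the $l_{\alpha j}$).
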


\begin{proof}
Let $D_1,\ldots,D_m \in K[x_1,\ldots,x_n]$ be the
$(m-1)\times(m-1)$-subdeterminants of $M_V$. By $F_1 \cdot V=F_d$
and Lemma~\ref{lm:Block} we can find, for each monomial $x^\alpha$
of degree at most $d$, affine-linear forms
$l_{\alpha 1},\ldots,l_{\alpha m} \in F_1$ such that $x^{\alpha}=\sum_j
(-1)^{j-1} l_{\alpha j} D_j$. Define
\[ s:=\sum_{|\alpha| \le d} c_\alpha (l_{\alpha
1},\ldots,l_{\alpha m}), \]
a row vector of bi-affine linear forms in the $x_i$ and the
$c_{\alpha}$. Then, by Laplace expansion along the first row, we find
that the determinant of
\[ \begin{bmatrix} s\\M_V \end{bmatrix} \]
is the generic polynomial of degree $d$ in $x_1,\ldots,x_n$.
\end{proof}

\begin{ex}\label{ex:lin1}
{\rm
For $n=2$ the following picture gives a space $V$, connected to $1$
and spanned by the monomials marked with black vertices, such that $F_1
\cdot V=F_6$:
\begin{center}
\includegraphics{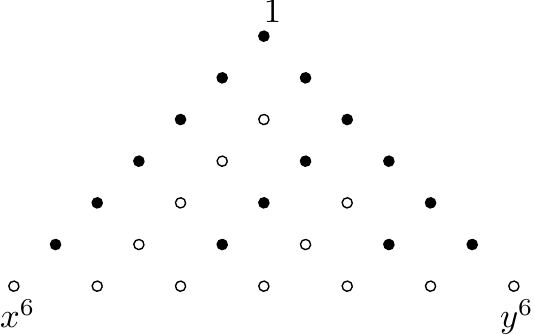}
\end{center}
This is the construction of \cite{BorMichiel}, which shows that there exists a
uniform determinantal representation of the generic bivariate polynomial of
degree $d$ of size $\frac{1}{4} \, d^2 + O(d)$ as $d \to \infty$.
\hfill $\clubsuit$
}
\end{ex}

The bivariate case generalises as follows.

\begin{thm} \label{thm:Cons1}
For fixed $n$, there exists a determinantal representation $M=M_0+M_1$ of
the generic $n$-variate polynomial of degree $d$ of size
$\frac{1}{n \cdot n!} \, d^n + O(d^{n-1})$
such that, moreover, the singular matrix space
represented by $M_0$ is a compression space with a one-dimensional
witness. Moreover, under this latter additional condition on $M_0$,
the bound is sharp.
\end{thm}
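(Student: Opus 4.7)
The plan proceeds via matched upper and lower bounds, both controlled by Proposition~\ref{prop:Cons1}.

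For the upper bound, by Proposition~\ref{prop:Cons1} it suffices to exhibit a subspace $V \subseteq K[x_1,\ldots,x_n]$ that is connected to $1$, satisfies $F_1 \cdot V \supseteq F_d$, and has $\dim V = \tfrac{d^n}{n \cdot n!} + O(d^{n-1})$. I take $V$ spanned by the monomials $\{x^\alpha : \alpha \in S\}$ for a finite $S \subseteq \ZZ_{\ge 0}^n$ connected to $0$ and satisfying $T_d \subseteq S \cup \bigcup_{i=1}^n (S+e_i)$, where $T_d := \{\alpha : |\alpha| \le d\}$. For $n = 2$, the explicit set $S = \{(a,b) : a+b \le d-1,\ b\ \text{even}\} \cup \{(0,b) : 0 \le b \le d-1\}$ has cardinality $\tfrac{d^2}{4} + O(d)$, and the three required properties (connectedness to $0$, coverage of $T_d$, and the size count) are routine to verify directly. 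For $n \ge 3$, one builds $S$ inductively so that the number of degree-$e$ elements of $S$ is $\binom{e+n-1}{n-1}/n + O(e^{n-2})$; hockey-stick summation then gives $|S| = \tfrac{1}{n}\binom{d+n-1}{n} + O(d^{n-1}) = \tfrac{d^n}{n \cdot n!} + O(d^{n-1})$. The matrix output by Proposition~\ref{prop:Cons1} is $\begin{bmatrix} s \\ M_V \end{bmatrix}$; its $M_0$-component has zero first row, so $M_0$ represents a compression space with one-dimensional witness $\langle e_1\rangle_K$.

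For the lower bound, let $M = M_0 + M_1$ be any uniform determinantal representation of size $N$ with $M_0$ a compression space of one-dimensional witness. By the $\SL_N(K) \times \SL_N(K)$-action I may assume the witness is $\langle e_1\rangle_K$, so $M_0$ has zero first row; let $\tilde M_0$ be its bottom $(N-1) \times N$ block. The only nonvanishing $(N-1) \times (N-1)$ subdeterminants of $M_0$ are the $N$ maximal minors $D_1, \ldots, D_N$ of $\tilde M_0$; these span a subspace $V \subseteq K[x_1, \ldots, x_n]$ of dimension at most $N$, and Lemma~\ref{lm:Dets} specialises to $F_1 \cdot V \supseteq F_d$. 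After further column operations on $\tilde M_0$ one brings it into the Proposition~\ref{prop:Cons1}-form $M_V$ for an ordered basis $(f_1 = 1, f_2, \ldots, f_N)$ of $V$ of weakly increasing degrees --- obtained inductively by identifying the degree-$e$ basis elements of $V$ and echelonising the corresponding columns --- which renders $V$ connected to $1$. Introducing the graded pieces $W_e := V_e/V_{e-1} \hookrightarrow K[x_1,\ldots,x_n]_e$ of the filtration $V_e := V \cap F_e$, connectedness gives $W_e \subseteq K[x_1,\ldots,x_n]_1 \cdot W_{e-1}$; the degree-$e$ component of $F_1 \cdot V$, namely $W_e + K[x_1,\ldots,x_n]_1 \cdot W_{e-1}$, therefore collapses to $K[x_1,\ldots,x_n]_1 \cdot W_{e-1}$. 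Combined with $F_1 \cdot V \supseteq F_d \supseteq F_e$ this forces $K[x_1,\ldots,x_n]_1 \cdot W_{e-1} \supseteq K[x_1,\ldots,x_n]_e$ for every $e = 1, \ldots, d$, giving $n \dim W_{e-1} \ge \binom{e+n-1}{n-1}$. Summing via hockey-stick yields $\dim V = \sum_{e=0}^{d-1}\dim W_e \ge \tfrac{1}{n}\bigl(\binom{d+n}{n} - 1\bigr) = \tfrac{d^n}{n \cdot n!} + O(d^{n-1})$, and $N \ge \dim V$.

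The main anticipated obstacle is the reduction step in the lower bound, bringing the $(N-1) \times N$ matrix $\tilde M_0$ of an arbitrary one-dimensional-witness compression space into the Proposition~\ref{prop:Cons1}-form $M_V$ for a $V$ connected to $1$; this is precisely what couples the matrix-theoretic hypothesis to the combinatorial bound on graded pieces, and without it the naïve bound $\dim V \ge \binom{n+d}{n}/(n+1)$ obtained directly from $F_1 \cdot V \supseteq F_d$ falls short of the claimed coefficient $\tfrac{1}{n \cdot n!}$ by a factor of $(n+1)/n$. The remaining combinatorial counting and the upper-bound construction are routine by comparison.
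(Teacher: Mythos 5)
Your proposal takes a genuinely different route in the lower bound, and both halves have gaps that the paper's own proof avoids.

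\emph{Upper bound.} For $n=2$ your explicit set $S$ works and gives the right count, but for $n\ge 3$ the phrase ``one builds $S$ inductively so that the number of degree-$e$ elements is $\binom{e+n-1}{n-1}/n + O(e^{n-2})$'' is an assertion, not a construction---and this is precisely where an idea is needed. The paper does this by partitioning $\ZZ^{n-1}$ into the root lattice $\Lambda$ of type $A_{n-1}$ together with its $n-1$ cosets $e_i+\Lambda$, taking $S_0=(\ZZ\times\Lambda)\cap\Delta_d$ inside the simplex $\Delta_d$, and adjoining the coordinate-hyperplane sets $S_1,\dots,S_n$ both to guarantee connectedness to $0$ and to catch the monomials whose $\Lambda$-representative would have a negative coordinate. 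Without a recipe of this kind, your $O(d^{n-1})$ error term and the size count $\tfrac{1}{n}\binom{n+d}{n}+O(d^{n-1})$ are unproved.

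\emph{Lower bound.} You flag the key step yourself---``bringing $\tilde M_0$ into the form $M_V$, which renders $V$ connected to $1$''---but you never establish it, and it is not obviously true: nothing forces $1\in V$, let alone the full connectedness condition, from the hypothesis $F_1\cdot V\supseteq F_d$ alone (e.g., $V=\langle 1,x^2,y^2,xy\rangle$ satisfies $F_1\cdot V\supseteq F_2$ but is not connected to $1$). The paper sidesteps this entirely. It bounds $N$ directly rather than $\dim V$: the linear map $\phi\colon F_1^N\to K[x_1,\ldots,x_n]$, $(\ell_j)_j\mapsto\sum_j(-1)^j\ell_j D_j$, has image containing $F_d$ by Lemma~\ref{lm:Dets}, and its kernel contains the $N-1$ rows of $M_0'$ by Cramer's rule; these rows are $K$-linearly independent since $M_0'$ has full rank (Lemma~\ref{lm:SingSpace}). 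Hence $N(n+1)-(N-1)=Nn+1\ge\dim F_d$, i.e.\ $N\ge(\dim F_d-1)/n=d^n/(n\cdot n!)+O(d^{n-1})$, which is exactly your target constant, obtained with no connectedness hypothesis. Your graded argument on $W_e:=V_e/V_{e-1}$, if it could be made to stand, would also yield this constant, but it has a second soft spot beyond the connectedness issue: you assert the degree-$e$ component of $F_1\cdot V$ \emph{equals} $W_e+K[x_1,\ldots,x_n]_1\cdot W_{e-1}$, whereas a priori you only have the inclusion $\supseteq$; ruling out extra contributions from cancellations among higher-degree generators requires an argument you do not give.
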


\begin{proof}
Note that $\dim F_d=\binom{n+d}{n}= \frac{1}{n!} \, d^n + O(d^{n-1})$. Hence by
Proposition~\ref{prop:Cons1} it suffices to show the existence of a
subspace $V \subseteq F_d$ connected to $1$ and such that $F_1 \cdot
V=F_d$, where $\dim V=\frac{1}{n} \, \dim F_d + O(d^{n-1})$. We will, in fact, show
that $V$ can be chosen to be spanned by monomials.

First, recall that there exists a lattice $\Lambda$ in $\ZZ^{n-1}$
such that $\ZZ^{n-1}$ is the disjoint union of $\Lambda$ and its cosets
$e_i+\Lambda$ for $i=1,\ldots,n-1$, namely, the root lattice of type $A_n$
generated by the rows of the $(n-1) \times (n-1)$-Cartan matrix
\[
\left[
\begin{array}{rrrrr}
2&-1& & &\\
-1&2&-1& &\\
 &\ddots&\ddots&\ddots&\\
 & &-1&2&-1\\
 & & &-1&2
\end{array}
\right],
\]
where the empty positions represent zeros
\cite[Planche 1]{Bourbaki}. In particular,
the index of $\Lambda$ in $\ZZ^{n-1}$ equals $n$. For example, if
$n=3$, here is the root lattice $\Lambda$ (in black) and its two
cosets (in gray and white):
\begin{center}
\includegraphics[scale=.5]{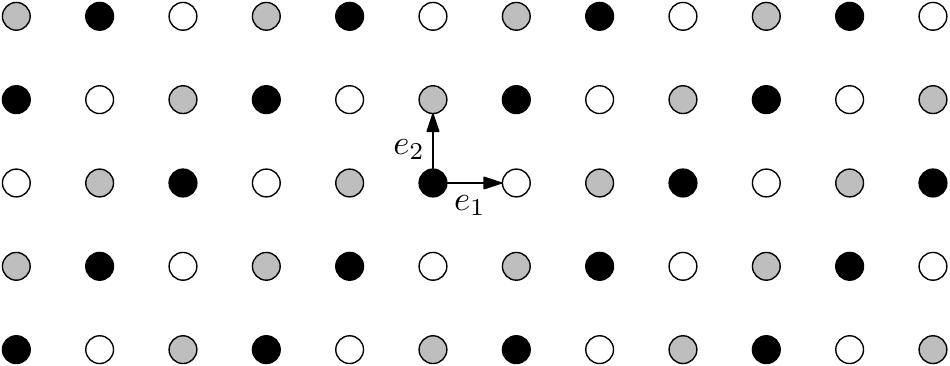}
\end{center}
Now let $\Delta_d$ be the simplex in $\RR^n$ with vertices
$0,de_1,\ldots,de_n$, for
$i=1,\ldots,n$ let $S_i$ be the set of lattice points in $\Delta_d$
that have $i$th coordinate zero, and set $S_0:=(\ZZ \times \Lambda)
\cap \Delta_d$. Define
\[ S:=S_1 \cup S_2 \cup \ldots \cup S_n \cup S_0,
\]
a subset of the lattice points in $\Delta_d$. We claim that $S$ is connected
to 0. Indeed, for each $i=1,\ldots,n$ the set $S_i$ is
connected to $0$,
and from each point $\alpha$ in $S_0$ one can walk within $S_0$
to $S_1$ by subtracting $\alpha_1$ times an $e_1$.

Next, we claim that for each $\alpha \in \Delta_d \cap \ZZ^n$ there
exists a $\beta \in S$ with $\alpha-\beta \in \{0,e_2,\ldots,e_n\}$.
Indeed, there is a (unique) $\beta'$ with this property in $\ZZ
\times \Lambda$. If this $\beta'$ has nonnegative entries, then set
$\beta:=\beta' \in S_0$. Otherwise, $\alpha$ itself has a
zero entry, say on the $i$th position, and we set $\beta:=\alpha \in S_i$.

Furthermore, for $i=1,\ldots,n$ the set $S_i$ contains $O(d^{n-1})$
vertices, and $S_0$ contains
$\frac{1}{n} \cdot \frac{1}{n!} \, d^n + O(d^{n-1})$
vertices. This concludes the construction--note that in the construction of
Proposition~\ref{prop:Cons1} the matrix $M_0$ has a zero
row, so that it represents a compression space with a
one-dimensional witness.

For sharpness, assume that $M=M_0+M_1$ is a uniform determinantal
representation of size $N$ such that the singular matrix space represented
by $M_0$ is a compression space with a one-dimensional witness. After a
choice of basis of $K^n$, we may assume that the first row of $M_0$
is identically zero; write $M_0=[0|M_0']^T$ accordingly. Let $u$ be the
first row of $M_1$ and write $M_1=[u|M_1']^T$. Then we have
\[
p=\sum_{|\alpha| \le d} c_\alpha x^\alpha = \det[u|M_0'+M_1'].
\]
Let $D_1,\ldots,D_N$ denote the $(N-1) \times (N-1)$ subdeterminants
of $M_0'$. By Lemma~\ref{lm:Dets}, the space $V$ spanned by these satisfies $F_1
\cdot V \supseteq F_d$. This already gives a lower bound of $V$ equal to
$d^n/((n+1)n!) + O(d^{n-1})$. To improve the $n+1$ in the denominator
into an $n$, we observe that by Cramer's rule, the map
\[
F_1^N \to K[x_1,\ldots,x_n],\ (\ell_1,\ldots,\ell_N) \mapsto \sum_i
(-1)^i \ell_i D_i
\]
has every column of $M_0'$ in its kernel. These
columns are linearly independent over $K$ (indeed over
$K(x_1,\ldots,x_n)$; see Lemma~\ref{lm:SingSpace}). We conclude that
\begin{equation} \label{eq:Ineq}
N \cdot \dim F_1 - (N-1) \ge \dim F_d,
\end{equation}
so that
\[ N \ge ((\dim F_d)-1)/n = d^n/(n \cdot n!) + O(d^{n-1}), \]
as desired.
\end{proof}

In the next section we derive a second
general construction of uniform determinantal representations, which we
use to prove Theorem~\ref{thm:Main}.

\section{A second construction} \label{sec:Second}

For a while, we believed that the uniform determinantal
representations of Theorem~\ref{thm:Cons1} were optimal.
But then we realised that
if one relaxes the condition that $M_0$ represent a compression space
with one-dimensional witness to the condition that $M_0$ represent
{\em some} compression space, smaller-size representations are possible.
The basic example is the following.

\begin{ex} \label{ex:2d}
{\rm
Let $p=\sum_{i+j \le 4} c_{ij} x^i y^j$ be the generic
polynomial of degree $d=4$ in $n=2$ variables. It has the
following uniform determinantal representation:
\begin{equation}\label{eq:repjan}
p=\det
\begin{bmatrix}
-x     & \ph-1  &      &      &      &     &     &     &     \\
      &   -x  & \ph-1  &      &      &     &     &     &     \\
      &      &   -x  & \ph-1  &      &     &     &     &     \\
      &      &      &   -x  & 1  &     &     &     &     \\
c_{00}&c_{10}&c_{20}&c_{30}&c_{40}&   -y &     &     &     \\
c_{01}&c_{11}&c_{21}&c_{31}&      & \ph-1 &   -y &     &     \\
c_{02}&c_{12}&c_{22}&      &      &     & \ph-1 &   -y &     \\
c_{03}&c_{13}&      &      &      &     &     & \ph-1 &   -y \\
c_{04}&      &      &      &      &     &     &     & \ph-1
\end{bmatrix},
\end{equation}
where the empty positions denote zeros. Let $M=M_0+M_1$ be the matrix
on the right-hand side. In this case, $M_0$ represents a
compression space with witness $U=\langle e_5,\ldots,e_9 \rangle_K$,
which is mapped into $\langle e_6,\ldots,e_9 \rangle_K$.

To verify the identity above without too many calculations, note that the
5 maximal subdeterminants of the $4 \times 5$-block with $x$'s are,
consecutively, $1,-x,x^2,-x^3,x^4$, and similarly for $y$. The matrix
obtained from $M$ by deleting the column corresponding to $x^i$ and the
row corresponding to $y^j$ has determinant $x^iy^j$.

This example extends to a uniform determinantal representation of size
$2d+1$ for the generic bivariate polynomial $p$ of degree $d$. We get
$p=\det(M)$, where
\[
M=(-1)^d\begin{bmatrix}M_x & 0\cr L & M_y^T\end{bmatrix},
\]
$M_x$ and $M_y$ are $d\times (d+1)$ matrices with 1 on the first
upper diagonal and $-x$ and $-y$ respectively on the main diagonal,
while $L$ is a $(d+1)\times (d+1)$ triangular matrix such that
$\ell_{ij}=p_{j-1,i-1}$ for $i+j\le d+2$ and 0 otherwise.
Note that we will slightly improve on the size $2d+1$ in Example~\ref{ex:2d1}.
\hfill $\clubsuit$
}
\end{ex}

Example~\ref{ex:2d} generalises as follows.

\begin{prop} \label{prop:Cons2}
Let $V,W \subseteq K[x_1,\ldots,x_n]$ be subspaces connected to $1$ such
that $F_1 \cdot V \cdot W \supseteq F_d$. Then there exists a uniform
determinantal representation of the generic $n$-variate polynomial of
degree $d$ of size $-1 + \dim V + \dim W.$
\end{prop}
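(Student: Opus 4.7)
The plan is to generalise the block construction in Example~\ref{ex:2d}. Set $m := \dim V$ and $k := \dim W$, and let $M_V$ (resp.\ $M_W$) be the $(m-1) \times m$ (resp.\ $(k-1) \times k$) matrix provided by Lemma~\ref{lm:Block} relative to some basis of $V$ (resp.\ $W$). Assemble the square matrix of size $m+k-1$
\[
M := \begin{bmatrix} M_V & 0 \\ L & M_W^T \end{bmatrix},
\]
where $L$ is a $k \times m$ matrix whose entries are still to be chosen as $K$-linear combinations of the monomials $1, x_i, c_\alpha, c_\alpha x_i$. Every entry of $M$ is then bi-affine in $(x, c)$ by construction, so it remains only to pick $L$ so that $\det(M) = p_{n,d}$.

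To do this, I would compute $\det(M)$ by Laplace expansion along its first $m-1$ rows. Since the top-right block vanishes, only column sets that are $(m-1)$-subsets of $\{1,\ldots,m\}$ contribute, namely those obtained by omitting a single column index $j \in \{1,\ldots,m\}$. For each such $j$ the top factor is, up to a sign, the $(m-1)$-minor $D_j \in V$ of $M_V$ obtained by deleting column $j$, and the complementary bottom factor is the $k \times k$ determinant of $[L_{*,j} \mid M_W^T]$. Expanding this latter determinant along its first column $L_{*,j}$ produces $\sum_i \pm L_{ij} E_i$, where $E_i \in W$ is the corresponding $(k-1)$-minor of $M_W$ (obtained by deleting its $i$-th column, which equals deleting row $i$ of $M_W^T$). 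Combining the two expansions yields
\[
\det(M) = \sum_{i,j} \eta_{ij}\, L_{ij}\, D_j\, E_i
\]
for some signs $\eta_{ij} \in \{\pm 1\}$.

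By Lemma~\ref{lm:Block}, the minors $D_1,\ldots,D_m$ span $V$ and $E_1,\ldots,E_k$ span $W$, hence the products $\{D_j E_i\}$ span $V \cdot W$. The hypothesis $F_1 \cdot V \cdot W \supseteq F_d$ therefore allows us to choose, for each multi-index $\alpha$ with $|\alpha| \le d$, affine-linear forms $\ell_{ij,\alpha} \in F_1$ with $x^\alpha = \sum_{i,j} \eta_{ij}\, \ell_{ij,\alpha}\, D_j\, E_i$. Setting
\[
L_{ij} := \sum_{|\alpha| \le d} c_\alpha\, \ell_{ij,\alpha}
\]
makes each entry of $L$ bi-affine in $(x,c)$, and substituting back into the expansion of $\det(M)$ yields $\sum_\alpha c_\alpha\, x^\alpha = p_{n,d}$.

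There is no real obstacle beyond bookkeeping: Lemma~\ref{lm:Block} already supplies the bases $\{D_j\}$ and $\{E_i\}$; the $F_1$ factor in the hypothesis is precisely what affords the entries of $L$ an $x$-dependence, as opposed to being mere scalars in the $c_\alpha$; and the signs $\eta_{ij}$ coming from the two Laplace expansions are simply absorbed into the definition of the $\ell_{ij,\alpha}$.
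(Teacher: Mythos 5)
Your proposal is correct and follows essentially the same construction as the paper: the same block matrix $\begin{bmatrix} M_V & 0 \\ L & M_W^T \end{bmatrix}$, the same reduction to choosing $L$ so that $\sum_{i,j}\pm L_{ij}D_jE_i$ gives the generic polynomial, and the same use of Lemma~\ref{lm:Block} and the hypothesis $F_1\cdot V\cdot W\supseteq F_d$. The only cosmetic difference is in how the bilinear expansion $\det(M)=\sum_{i,j}\pm L_{ij}D_jE_i$ is justified: the paper argues that $\det(M)$ is linear in the entries of $L$ by noting $M_0$ is singular (no degree-$0$ terms) and that any two columns of the $M_V$-block removed yield a singular matrix (no degree-$\ge 2$ terms), whereas you obtain the same identity directly via generalised Laplace expansion along the first $m-1$ rows followed by cofactor expansion along the column $L_{*,j}$; both are standard and equivalent.
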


\begin{proof}
Set $m_1:=\dim V$ and $m_2:=\dim W$. Consider the matrix
\[
M:=
\begin{bmatrix}
M_V & 0 \\ L & M_W^T
\end{bmatrix},
\]
with $M_V$ and $M_W$ the matrices of sizes $(m_1-1) \times m_1$
and $(m_2-1)\times m_2$ from Lemma~\ref{lm:Block}, and where
$L=(\ell_{ij})_{ij}$ is an $m_2 \times m_1$-matrix to be determined. Note
that the determinant of $M$ is linear in the entries of $L$.  Indeed,
setting $L=0$ yields the singular matrix $M_0$, so $\det(M)$ contains
no terms of degree 0 in the entries of $L$.
Furthermore, deleting from $M$ two or more
of the first $m_1$ columns from $M_V$, we end up with a matrix that
is singular since, when acting on rows, it maps the span of $\langle
e_1,\ldots,e_{m_1-1} \rangle$ into a space of dimension at most $m_1-2$,
so $\det(M)$ does not contain terms that are of degree $> 1$
in the entries of $L$.

Hence the determinant equals $\sum_{ij} \pm \ell_{ij} D_j E_i$ where
the $D_j$ are the maximal subdeterminants of $M_V$ and the $E_i$ are
the maximal subdeterminants of $M_W$. By Lemma~\ref{lm:Block} we have
$V=\langle D_1,\ldots,D_{m_1} \rangle_K$ and $W=\langle E_1,\ldots,E_{m_2}
\rangle_K$. Hence the assumption that $F_1 \cdot V \cdot W \supseteq F_d$
ensures that we can choose the $\ell_{ij} \in F_1$ in such a manner that
the determinant of $M$ equals the generic polynomial $p$.
\end{proof}

\begin{ex} \label{ex:2d1}
{\rm
Example~\ref{ex:2d} can be slightly improved to a representation of size
$2d-1$ by taking $V=\langle 1,x,\ldots,x^{d-1} \rangle$ and $W=\langle
1,y,\ldots,y^{d-1} \rangle$; note that, indeed, $F_1 \cdot V \cdot W
\supseteq F_d$.
A representation of size $2d-1$ for the polynomial $p$ from \eqref{eq:repjan} is
\begin{equation}\label{eq:repjan2}p=\det
\begin{bmatrix}
-x     & \ph-1  &      &      &      &     &         \\
      &   -x  & \ph-1  &      &      &     &         \\
      &      &   -x  & -1  &      &     &         \\
c_{00}&c_{10}&c_{20}&c_{30}+c_{40} x&  -y &      \\
c_{01}&c_{11}&c_{21}+c_{31}x & & \ph-1 &  -y &   \\
c_{02}+ c_{03} y&c_{12}+c_{22}x& &     &      & \ph-1 &   -y     \\
 c_{13} x+c_{04} y& &     &      &      &     & \ph-1 \\
\end{bmatrix}.
\end{equation}
 We do not know whether the factor 2 can be improved.
\hfill $\clubsuit$
}
\end{ex}

\begin{re}
{\rm
A representation of the form \eqref{eq:repjan2} can also be obtained
from the linearisations based on dual basis from \cite{Robol}. There,
linearisations of a univariate polynomial are presented that use the basis
of the form $\phi_i(x)\psi_j(x)$, where $\phi_i$ and $\psi_j$ are
polynomials. If we use the same approach for a bivariate polynomial
with the standard basis $\phi_i=x^i$ and $\psi_j=y^j$, we get a representation of the form
\eqref{eq:repjan2} up to permutations of rows and columns.
}
\end{re}

We will now prove our main theorem.

\begin{proof}[Proof of Theorem~\ref{thm:Main}, lower bound.]
For the lower bound on $N^*(n,d)$, let $M=M_0+M_1$ be a size-$N$ uniform
determinantal representation of the generic $n$-variate polynomial
of degree $d$. Let $D_{ij}$ be the $(N-1) \times (N-1)$-determinant
of the submatrix of $M_0$ obtained by deleting the entry at position
$(i,j)$. Then the image of the linear map
\[ \phi: F_1^{N \times N} \to K[x_1,\ldots,x_n],\
(\ell_{ij})_{i,j} \mapsto \sum_{i,j} (-1)^{i+j} \ell_{ij}
D_{ij} \]
contains $F_d$ (see Lemma~\ref{lm:Dets}).

We claim that $\phi$ has a kernel of dimension at
least $N(N-1)$. Indeed, fix any row index $i_0$. If the $D_{i_0,j}$ are
all zero, then we obtain an $(n+1)N$-dimensional subspace of $\ker \phi$
by setting all $\ell_{i,j}$ with $i \neq i_0$ equal to zero and choosing
the $\ell_{i_0,j} \in F_1$ arbitrarily. If they are not all zero, then
the $N-1$ rows with indices $i_1 \neq i_0$ are linearly independent over
$K(x_1,\ldots,x_n)$ and hence {\em a fortiori} over $K$. For
each such $i_1$ define $\ell^{(i_1)} \in F_1^{N \times N}$
by
\[ \ell^{(i_1)}_{ij}:=
\begin{cases}
(M_0)_{i_1,j} & \text{ if } i=i_0, \text{ and }\\
0 &\text{otherwise.}
\end{cases}
\]
Then, by Cramer's rule, we have $\phi(\ell^{(i_1)})=0$, and these $N-1$
vectors are linearly independent. Hence, for each $i_0$ we find a subspace
of $\ker \phi$ of dimension at least $N-1$, and these subspaces are
linearly independent. Thus we find that
\[ N^2(n+1)-N(N-1) = N^2 n + N  \ge \dim F_d =
\frac{d^n}{n!}+O(d^{n-1}), \]
from which the existence of $C_1$ follows.
\end{proof}

\begin{re}
{\rm
In the proof of the lower bound we have been a bit more careful than
strictly needed: without the discussion of the kernel it follows
that $N^2 \ge (\dim F_d)/(n+1)$. But one derives a better
constant (for $d\to \infty$) by using the kernel.
}
\end{re}

For the upper bound, we first give a simple construction for even $n$.
For odd $n$, a trickier analysis is needed (which also applies in
the even case); see below.

\begin{proof}[Proof of Theorem~\ref{thm:Main}, upper bound for even $n$]
Assume that $n=2m$ with $m \in \ZZ_{\ge 0}$. Let $V$ be the space of
polynomials in $x_1,\ldots,x_m$ of degree at most $d$, and let $W$ be
the space of polynomials in $x_{m+1},\ldots,x_n$ of degree at most $d$.
Then $V$ and $W$ are connected to $1$ and we have $F_1 \cdot V \cdot W \supseteq
F_d$, so that by Proposition~\ref{prop:Cons2} we have $N^*(n,d) \le -1 +
\dim V+\dim W$. Now compute
\[ \dim V=\dim W=\binom{m+d}{m}=\frac{d^{n/2}}{(n/2)!}+O(d^{m-1}). \]
This implies the existence of $C_2$ for even $n$.
\end{proof}

We now give a construction that works for all $n>2$, for which we thank
Aart Blokhuis.

\begin{proof}[Proof of Theorem~\ref{thm:Main}, upper bound for $n>2$]
For $i=0,1$ let $B_i \subseteq \ZZ_{\ge 0}$ denote the set of nonnegative
integers that can be expressed as $\sum_{j=0}^e b_j 2^{2j+i}$ with
$b_j \in \{0,1\}$, i.e., whose binary expansions have ones only at
even positions (for $i=0$, counting the least significant bit as
zeroeth position) or only at odd positions (for $i=1$). Observe that
$B_0+B_1=\ZZ_{\ge 0}$ and that both $B_0$ and $B_1$ contain roughly
$\sqrt{d}$ of the first $d$ nonnegative integers for every $d$--they
have ``dimension $1/2$''.  Now set $A_i:=B_i^n \subseteq \ZZ_{\ge 0}^n$
for $i=0,1$, so that $A_0+A_1=\ZZ_{\ge 0}^n$ and the number of elements
of $A_i$ intersected with a large box $[0,d]^n$ is roughly $d^{n/2}$.

Unfortunately, $A_0$ and $A_1$ are not connected to $0$.
However, we can connect them to 0 as follows. For a lattice point $\alpha \in A_i
\setminus \{0\}$ let $l$ be the minimum among the 2-adic valuations of
its entries, attained, say, by $\alpha_j$. Then
set $\tilde{\alpha}_j:=\alpha_j-2^l \in B_i$.  Setting the remaining
coordinates of $\tilde{\alpha}$ equal to those of $\alpha$ we have
$\tilde{\alpha} \in A_i$ and
\[ \|\alpha-\tilde{\alpha}\|_1=2^l. \]
We propose to add to $A_i$ the sequence
\[ \alpha-e_j,\alpha-2e_j,\ldots,\alpha-(2^l-1)e_j \]
to connect $\alpha$ to $\tilde{\alpha}$. We need to verify, however, that
in doing this, $A_i$ retains dimension $n/2$. The fraction of $\alpha$
in (a large box intersected with) $A_i$ for which the minimal valuation
is at least $l$ equals roughly $(2^{(-l+i)/2})^n$--after all, the
condition is that for each $j=1,\ldots,n$, $\alpha_j$ has zeros on
the first $(l-i)/2$ positions where it is allowed to have ones. Write
$l=i+2m$.  Thus by adding the sequences above, the total increase of $A_i$
is by a factor of at most
\[ \sum_{m=0}^\infty 2^{2m+i} (2^{-m})^n = \sum_{m=0}^\infty 2^{(2-n)m+i}. \]
This is a convergent series as $n>2$, and hence $A_i$ retains
dimension $n/2$.

We have thus constructed subsets $A_0,A_1 \subseteq \ZZ_{\ge 0}^n$ that
satisfy $A_0+A_1=\ZZ_{\ge 0}^n$, are connected to $1$, and that contain
roughly a constant times $d^{n/2}$ points in each box $[0,d]^n$. For
$i=0,1$ let $V_i$ be the space spanned by the monomials whose exponent
vectors lie in $A_i \cap [0,d]^n$. Then $V_1 \cdot V_2 \supseteq F_d$
and $V_1,V_2$ are connected to $1$, so by Proposition~\ref{prop:Cons2}
there exists a uniform determinantal representation for $p_{n,d}$ of
size $\dim V_1+\dim V_2=O(d^{n/2})$, as desired.
\end{proof}

\begin{re}
{\rm
The construction in the proof is by no means tight. For example, one could
also replace $\alpha_j$ by $\alpha_j-2^l+2^{l-2}+2^{l-4}+\ldots+2^i$,
which yields a shorter sequence to be added; and for large $l$ we have
also counted additional, shorter sequences since they also have valuation
larger than numbers smaller than $l$. We think that for even $n$ the
previous construction, subdividing the variables into two sets of equal
size, may lead to a better constant, but we have not verified this.
}
\end{re}

\begin{ex}
{\rm
Carrying out the construction in the proof for $A_0$ with $n=3$, always choosing
for $j$ the smallest index of a coordinate of $\alpha$ with minimal
valuation, we arrive at the following fractal-like structure (the circles indicate
the points of $A_0$, the black edges show that $A_0$ is
connected to $0$):
\begin{center}
\includegraphics[width=.4\textwidth]{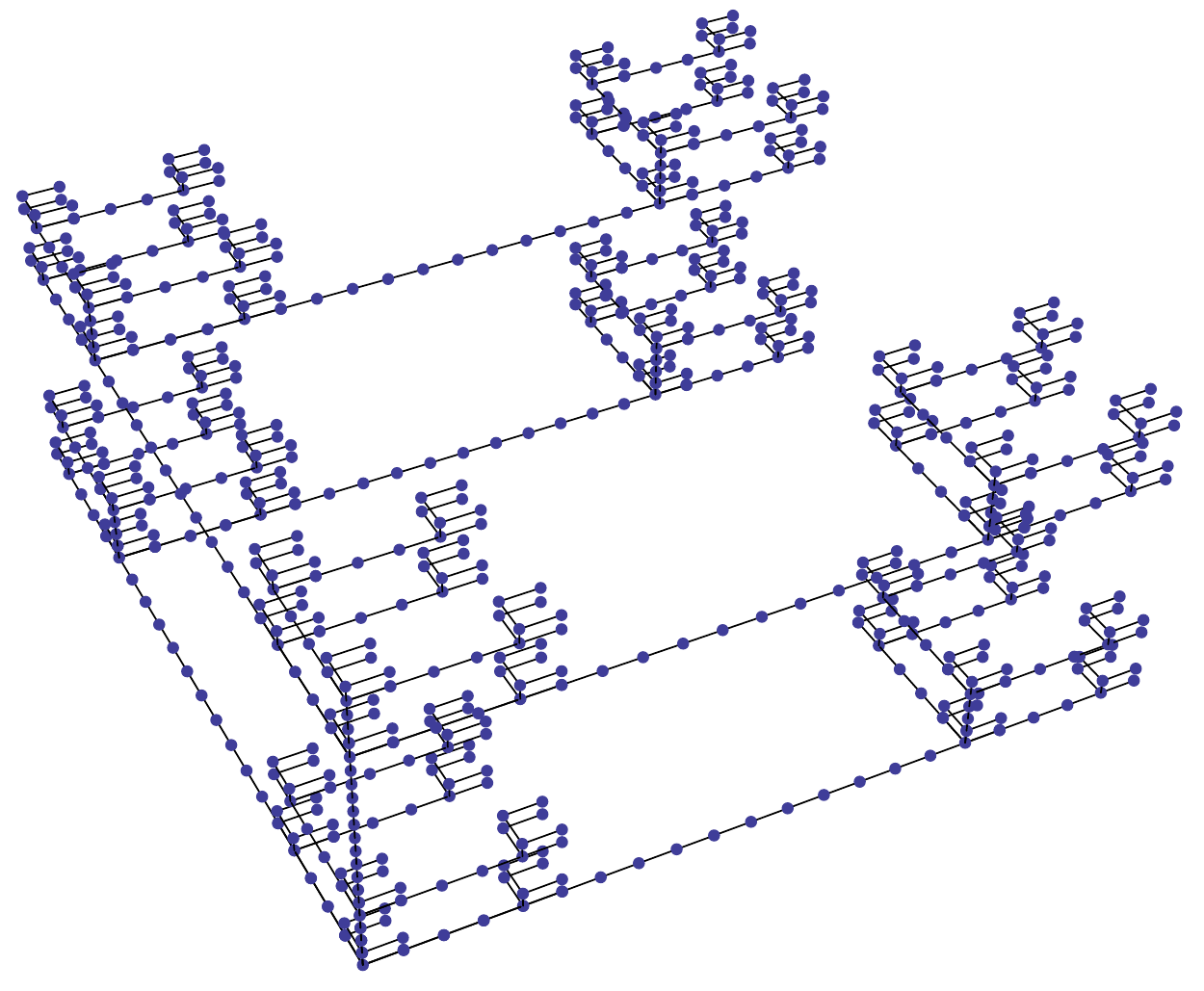}
\end{center}
}
\end{ex}

\section{Small $n$ and $d$} \label{sec:Small}

In this section we give several uniform representations of--to our knowledge--the smallest possible size
for cases where $n$ and $d$ are small. We start with the two cases
where we can compute $N^*(n,d)$ exactly.

\begin{prop}
$N^*(2,2)=3$.
\end{prop}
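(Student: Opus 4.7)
The plan is to combine the upper bound $N^*(2,2)\le 3$ (already furnished by Example~\ref{ex:BinaryQuadric}) with a matching lower bound $N^*(2,2)\ge 3$. The latter reduces to ruling out a uniform determinantal representation of size $N=2$, since sizes below $\deg p_{2,2}=2$ are impossible on trivial grounds.

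Suppose, for contradiction, that $M=M_0+M_1$ is a representation of size $2$. The key step is to normalise $M_0$. Writing $M_0=B_0+x_1B_1+x_2B_2$, Lemma~\ref{lm:SingSpace} makes $\langle B_0,B_1,B_2\rangle$ a singular subspace of $K^{2\times 2}$. The $N=2$ case of the classification, recorded in Example~\ref{ex:SmallSingSpace}, says that every such subspace is, up to $\SL_2(K)\times\SL_2(K)$-conjugation and possibly transposition, contained in the space of matrices whose second row is zero. Since these transformations preserve $\det M$, we may assume $M_0$ has second row zero and first row $(a,b)$ with $a,b\in F_1$.

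With this normalisation in hand, the rest of the argument is a dimension count. Lemma~\ref{lm:Dets} identifies the span $V$ of the $1\times 1$ subdeterminants of $M_0$, which here lies inside $\langle a,b\rangle_K$, as a space satisfying $F_1\cdot V\supseteq F_2$. Hence $F_2\subseteq aF_1+bF_1$. The subspaces $aF_1$ and $bF_1$ each have $K$-dimension at most $3$; and crucially, when $a,b$ are $K$-linearly independent the nonzero polynomial $ab$ belongs to both of them, forcing
\[ \dim(aF_1+bF_1)\le 3+3-1=5, \]
while in the linearly dependent case (including $a=0$ or $b=0$) the sum has dimension at most $3$. Either way this contradicts $\dim F_2=6$. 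The main obstacle in the argument is therefore the reduction step, which rests on the classification of singular subspaces of $K^{2\times 2}$ from Example~\ref{ex:SmallSingSpace}; the dimension count itself is routine.
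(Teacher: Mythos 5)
Your proof is correct and follows essentially the same route as the paper: the upper bound is Example~\ref{ex:BinaryQuadric}, and the lower bound reduces, via Example~\ref{ex:SmallSingSpace} and Lemma~\ref{lm:SingSpace}, to the case where $M_0$ is a compression space with a zero row, then concludes with a dimension count. The paper invokes the general inequality~\eqref{eq:Ineq} established in the sharpness part of Theorem~\ref{thm:Cons1} (which encodes the Cramer-rule kernel relation), whereas you re-derive the same bound $\dim(aF_1+bF_1)\le 5<6=\dim F_2$ directly from the observation $ab\in aF_1\cap bF_1$; these are the same argument in lightly different clothing.
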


\begin{proof}
Taking $V=\langle 1,x,y \rangle$ in Proposition~\ref{prop:Cons1}
we see that $N^*(2,2) \le 3$; this is the representation
of Example~\ref{ex:BinaryQuadric}. Suppose that a uniform
determinantal representation $M=M_0+M_1$ of size $N=2$ exists. Then,
by Example~\ref{ex:SmallSingSpace}, after acting with $\SL_2(K) \times
\SL_2(K)$ and transposing if necessary, we may assume that the singular
space represented by $M_0$ is a compression space with a one-dimensional
witness. But then \eqref{eq:Ineq} reads
\[ 2 \cdot 3 - 1 = N \cdot \dim F_1-(N-1) \ge \dim F_2 = 6, \]
a contradiction. Hence $N^*(2,2)=3$.
\end{proof}

\begin{prop}
$N^*(3,2)=4$.
\end{prop}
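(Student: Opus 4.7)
\emph{Upper bound.} Apply Proposition~\ref{prop:Cons1} with $V=F_1=\langle 1,x_1,x_2,x_3\rangle$: this four-dimensional space is connected to $1$ and satisfies $F_1\cdot V=F_1\cdot F_1=F_2$, yielding a uniform determinantal representation of size $4$. Hence $N^*(3,2)\le 4$.

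\emph{Lower bound.} Suppose toward a contradiction that a uniform determinantal representation $M=M_0+M_1$ of size $N\le 3$ exists. The plan is to use Example~\ref{ex:SmallSingSpace}, together with the $\SL_N(K)\times\SL_N(K)$ action, transposition, and the affine action from Lemma~\ref{lm:Affine}, to reduce the singular matrix space represented by $M_0$ to one of a small number of normal forms, and to show in each that the cofactor space $V=\langle D_{ij}\rangle$ fails the necessary condition $F_1\cdot V\supseteq F_2$ of Lemma~\ref{lm:Dets}. For $N=2$, conjugating $M_0$ to have zero second row makes $V\subseteq F_1$ the span of two affine-linear forms $a,b$, and a direct dimension count gives $\dim(F_1\cdot V)\le 2\dim F_1-1=7<10=\dim F_2$.

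For $N=3$, transposition identifies the two compression spaces with one- and full-dimensional witness, leaving three cases: (a) $M_0$ has zero last row; (b) $M_0$ has the block shape $\bigl[\begin{smallmatrix}*&*&*\\0&0&*\\0&0&*\end{smallmatrix}\bigr]$; (c) $M_0$ is skew-symmetric. Case~(a) is the technical heart. Only the cofactors $D_{3,j}$ are nonzero, $V=\langle D_{3,1},D_{3,2},D_{3,3}\rangle\subseteq F_2$ has dimension $\le 3$, and a Cramer-type identity puts both rows of the top $2\times 3$ block $A$ in the kernel of $\phi\colon F_1^3\to K[x]$, $(\ell_j)\mapsto\sum_j(-1)^{j+3}\ell_jD_{3,j}$, so $\dim\im\phi\le 10=\dim F_2$; the inclusion $\im\phi\supseteq F_2$ then forces $\im\phi=F_2$ and hence the vanishing of the degree-$3$ part of every element of the image. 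Specializing $\ell_j$ to an arbitrary homogeneous linear form in one coordinate and zero elsewhere, I conclude that each degree-$2$ homogeneous part $D_{3,j}^{(2)}$ vanishes, i.e., the linear part $A^{(1)}$ has rank $\le 1$; a row/column reduction then brings $A$ to the form $\bigl[\begin{smallmatrix}\ell_1&\ell_2&\ell_3\\1&0&0\end{smallmatrix}\bigr]$, so $V\subseteq F_1$ and $\dim(F_1\cdot V)\le 7<10$, a contradiction.

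In case~(b), a direct computation yields $V=\langle a,b\rangle\cdot\langle d,e\rangle$, and one splits on whether $a,b$ have a common zero in $\bar{K}^3$: if yes, $F_1\cdot V$ vanishes at that point, so cannot contain the constant polynomial $1$; if not, $a$ and $b$ are affinely parallel and $\langle a,b\rangle=\langle 1,\ell\rangle$ for some homogeneous linear $\ell$, and similarly $\langle d,e\rangle=\langle 1,\ell'\rangle$, whereupon a straightforward Koszul computation bounds the degree-$2$ part of $F_1\cdot V$ by $\dim(F_1^{(1)}\cdot\langle\ell,\ell'\rangle)\le 5$, strictly less than the dimension $6$ of the space of quadratic homogeneous polynomials. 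For case~(c), $V=\operatorname{Sym}^2 W$ with $W=\langle a,b,c\rangle\subseteq F_1$, and after using Lemma~\ref{lm:Affine} to translate $W$ to either $\langle x_1,x_2,x_3\rangle$ (so $V$ is purely degree-$2$ homogeneous and $F_1\cdot V$ contains no constants) or $\langle 1,\ell_1,\ell_2\rangle$ (so the degree-$2$ part of $F_1\cdot V$ is again at most $5$-dimensional), each subcase contradicts $F_1\cdot V\supseteq F_2$. The main obstacle is case~(a), where the dimension inequality from Theorem~\ref{thm:Cons1} becomes the exact equality $3\cdot 4-2=10=\dim F_2$ and the subtler degree-$3$ analysis of $\im\phi$ is required.
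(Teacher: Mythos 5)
Your overall strategy mirrors the paper's: classify the $3\times3$ singular space of $M_0$ into the two compression-space types plus the skew-symmetric one, and in each case contradict $F_1\cdot V\supseteq F_2$ from Lemma~\ref{lm:Dets}. Cases~(b) and~(c) are fine, and your explicit treatment of $N=2$ is a nice addition that the paper leaves implicit. But case~(a) has a genuine gap. Having correctly shown that each $D_{3,j}$ has degree $\le 1$, equivalently that the homogeneous linear part $A^{(1)}$ of the $2\times3$ block $A$ has all $2\times2$ minors zero, you then claim a row/column reduction over $K$ brings $A$ to the form $\bigl[\begin{smallmatrix}\ell_1&\ell_2&\ell_3\\1&0&0\end{smallmatrix}\bigr]$. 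That is false: rank $\le1$ of $A^{(1)}$ over $K(x)$ does not force its two rows to be $K$-proportional; they could instead be in a ``columns $K$-proportional'' configuration. Concretely, for $A=\bigl[\begin{smallmatrix}x&1&0\\y&0&1\end{smallmatrix}\bigr]$ the linear part $A^{(1)}=\bigl[\begin{smallmatrix}x&0&0\\y&0&0\end{smallmatrix}\bigr]$ has all $2\times2$ minors zero, yet for any $g\in\GL_2(K)$, $h\in\GL_3(K)$ the second row of $gA^{(1)}h$ equals $(g_{21}x+g_{22}y)$ times the first row of $h$, which cannot vanish identically with $g,h$ invertible. In this example the cofactor space is $V=\langle 1,x,y\rangle$, of dimension $3$, so your bound $\dim(F_1\cdot V)\le 7$ fails, and the claimed contradiction $7<10$ does not go through as written.

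The conclusion is still within reach without the normal form: you already know $V\subseteq F_1$ is spanned by the three cofactors, so $\dim V\le 3$, and the same Koszul count you used for $N=2$ gives $\dim(F_1\cdot V)\le 4\cdot 3-\binom{3}{2}=9<10=\dim F_2$, a contradiction. (In the example above, $F_1\cdot V$ is exactly the $9$-dimensional complement of $z^2$ in $F_2$.) Alternatively, the paper's argument for case~(a) applies $\AGL_3(K)$ exactly as you do in case~(c): if $1\notin V$, translate $V$ into $\langle x,y,z\rangle$ so $1\notin F_1\cdot V$; if $1\in V$, reduce to $V\subseteq\langle 1,x,y\rangle$ so $z^2\notin F_1\cdot V$. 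Either repair is short, but as stated your case~(a) does not close.
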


\begin{proof}
Taking $V=\langle 1,x,y,z \rangle$ in Proposition~\ref{prop:Cons1}
we see that $N^*(3,2)\le 4$. Suppose that a uniform representation of
size $N=3$ exists. Up to transposition, there are three possibilities
for the singular space $\mathcal{A}$ represented by $M_0$; see
Example~\ref{ex:SmallSingSpace} (where the third
compression space is conjugate to the transpose of the
first):
\begin{enumerate}
\item
Assume that $\mathcal{A}$ is a compression space with a
one-dimensional witness, so that after acting with $\SL_3(K) \times
\SL_3(K)$ we have
\[ M_0=\begin{bmatrix}
0 & 0 & 0 \\
* & * & *\\
* & * & *
\end{bmatrix}.
\]
Let $D_j$ denote the determinant of the minor of $M_0$ obtained by
deleting the first row and $j$th column. Then the linear map
\[ \Omega:F_1^3 \mapsto K[x,y,z],\ (l_1,l_2,l_3) \mapsto
l_1D_1-l_2D_2+l_3D_3 \]
has $F_2 \subseteq \im \Omega$. Now inequality~\eqref{eq:Ineq} reads
\[ 3 \cdot 4 - 2 = N \cdot \dim F_1-(N-1) \ge \dim F_2 = 10, \]
which holds with equality. This means that, in fact, $\im \Omega$
must {\em equal} $F_2$. In particular, $D_1,D_2,D_3$ must all be of
degree one (or else $\im \Omega$ would contain cubic polynomials). The
image of $\Omega$ depends only on the span $V:=\langle D_1,D_2,D_3
\rangle \subseteq F_1$. If $1 \not \in V$, then there exists an
affine transformation in $\AGL_3(K)$ that maps $V$ into a subspace of
$\langle x,y,z \rangle$.  Then $1 \not \in F_1 \cdot V=\im \Omega$,
a contradiction. If $1 \in V$, then after an affine transformation we
find $\langle 1 \rangle \subseteq V \subseteq \langle 1,x,y \rangle$. In
that case, $z^2 \not \in F_1 \cdot V$, another contradiction.

\item Assume that $\mathcal{A}$ is a compression space with a
two-dimensional witness, so that after row and column operations we
have
\[
M_0=\begin{bmatrix}
0&0&q \\
0&0&r \\
s&t&*
\end{bmatrix},
\]
where $q,r,s,t \in F_1$. Write $M_1=(m_{ij})_{ij}$. Using that $\det(M)$
is assumed to be linear in the $c_\alpha$s, we find that
\[ \det(M)=-m_{11} rt + m_{12} rs + m_{21} qt - m_{22} qs. \]
Consequently, setting $V_1:=\langle q,r \rangle$ and $V_2:=\langle s,t
\rangle$, we have $F_1 \cdot V_1 \cdot V_2 \supseteq F_2$. If $1 \not \in
V_1$, then by acting with a suitable element of $\AGL_3(K)$ we achieve
that $V_1 \subseteq \langle x,y,z \rangle$. But then $F_1 \cdot V_1
\cdot V_2 \not \ni 1$. The same applies when $1 \not \in V_2$. On the
other hand, if $1 \in V_1 \cap V_2$, then by an element in $\AGL_3(K)$
we achieve that $\langle 1 \rangle \subseteq V_1,V_2 \subseteq \langle
1,x,y\rangle$. In that case, $z^2 \not \in F_1 \cdot V_1 \cdot V_2$.

\item Finally, assume that $\mathcal{A}$ is conjugate to a space of
skew-symmetric matrices, so that after conjugation
\[ M_0=
\left[
\begin{array}{rrr}
0 & q & r \\
-q& 0 & s \\
-r&-s & 0
\end{array}
\right]
\]
where $q,r,s \in F_1$. Set $V:=\langle q,r,s \rangle \subseteq F_1$. Then
the space spanned by the $2 \times 2$-determinants of $M_0$ is $V \cdot
V$, of dimension at most $6$. Moreover, we have $F_1 \cdot V \cdot V
\supseteq F_2$. If $1 \not \in V$, then by acting with $\AGL_3(K)$ we
achieve that $V \subseteq \langle x,y,z \rangle$, and hence $1 \not \in
F_1 \cdot V \cdot V$. If, on the other hand, $1 \in V$, then we achieve
that $\langle 1 \rangle \subseteq V \subseteq \langle 1,x,y \rangle$,
and $z^2 \not \in F_1 \cdot V \cdot V$.
\end{enumerate}
In each of these cases we arrive at a contradiction. Consequently,
$N^*(3,2)=4$ as claimed.
\end{proof}

The proofs above use the classification of spaces of small singular
matrices in an essential manner, as well as the action of $\AGL_n(K)$
on uniform determinantal representations. We conjecture that
$N^*(4,2)=5$, and that this can still be proved in the same manner,
using the classification of $4 \times 4$-singular matrix spaces from
\cite{FillmoreLaurieRadjavi}. But as Theorem~\ref{thm:Sing5} shows,
fundamentally new ideas will be needed to prove lower bounds in larger
situations.

For some pairs of small $n$ and $d$ we now give the smallest
uniform representations that we have been able to find.
For the constructions we use Proposition~\ref{prop:Cons2} with
subspaces $V,W\subseteq K[x_1,\ldots,x_n]$ spanned by the monomials and
connected to $1$. First, we give
in Table~\ref{tab:minnd} the minimal sizes known to us of
uniform determinantal representations for some small values
of $n$ and $d$.

\begin{table}[!htbp]
\caption{Minimal known sizes of uniform determinantal representations
we have been able to construct for $n$-variate polynomials of degree $d$;
cf.~Table~\ref{tab:wv}.\label{tab:minnd}}
\begin{center}
{\footnotesize
\begin{tabular}{c|ccccccccc} \hline \rule{0pt}{2.3ex}%
$n$ & $d=2$ & $d=3$ & $d=4$ & $d=5$ & $d=6$ & $d=7$ & $d=8$ & $d=9$ \\ \hline \rule{0pt}{2.3ex}%
2 & 3 & \ph{1}5 & \ph{1}7 & \ph{1}9 & 11& 13& 15& 17 \\
3 & 4 & \ph{1}7 & 10  & 14& 18& 22& 27& 34 \\
4 & 5 & \ph{1}9 &14 & 19& 26& 34& 44 \\
5 & 6 &11 &18 & 26& \\
6 & 7 &13 &22 & 33& \\
7 & 8 &15 &27 & 39& \\
8 & 9 &17 &32 &  \\ \hline
\end{tabular}
}
\end{center}
\end{table}

The corresponding representations for the entries
in Table~\ref{tab:minnd} for $n=2$, which are of size $2d-1$, are given in
Example~\ref{ex:2d1}.
For $d=2$ we take $V= \langle 1,x_1,\ldots,x_n \rangle$ and
$W= \langle 1 \rangle$,
therefore $N^*(n,2)\le n+1$, while for $d=3$ we can
take $V=W= \langle 1,x_1,\ldots,x_n \rangle$, and hence $N^*(n,3)\le
2n-1$.
In Table~\ref{tab:wv}
we give sets $V$ and $W$ for the remaining nonzero entries in Table~\ref{tab:minnd}.
The subspaces $V$ and $W$ have the form $V=V_0\cup V_1$ and
$W=W_0\cup W_1$, where
\begin{equation}\label{eq:vw0}
\begin{matrix}
  V_0=\langle1,x_1,\ldots,x_n,\ldots,x_1^e,\ldots,x_n^e\rangle,\\[0.4em]
  W_0=\langle1,x_1,\ldots,x_n,\ldots,x_1^f,\ldots,x_n^f\rangle
\end{matrix}
\end{equation}
for $e=\lceil (d-1)/2\rceil$ and $f=\lfloor (d-1)/2\rfloor$, which yields $d-1=e+f$.
For clarity and brevity, the variables $x,y,z,w,u,v,q,s$ in Table~\ref{tab:wv}
stand for $x_1,\ldots,x_8$, respectively.

\begin{table}[!htbp]
\begin{center}
\caption{List of monomials in $V_1$ and $W_1$ that,
together with $V_0$ and $W_0$ from \eqref{eq:vw0},
lead to uniform representations of sizes as in Table~\ref{tab:minnd}.\label{tab:wv}}
{\footnotesize
\begin{tabular}{ll|ll} \hline \rule{0pt}{2.3ex}%
$n$ & $d$ & $V_1$ & $W_1$\\
\hline \rule{0pt}{2.3ex}%
3 & 4 & $-$ & $-$\\
3 & 5 & $-$ & $xy$\\
3 & 6 & $-$ & $xy,\, x^2y$\\
3 & 7 & $-$ & $x^2y,\, y^2z,\, z^2 x$\\
3 & 8 & $-$ & $x^2y,\, y^2z,\, z^2x,\, x^2y^2,\, z^2w^2$ \\
3 & 9 & $x^3y,\,y^3z,\,z^3x$ & $x^2y,\,x^2z,\,y^2z,\,x^2y^2,\,x^2z^2,\,y^2z^2$\\[2mm]
4 & 4 & $-$ & $xy$\\
4 & 5 & $-$ & $xy,\, zw$\\
4 & 6 & $x^2y,\, y^2z,\, z^2w$ &
  $xy,\, x^2y,\, z w$\\
4 & 7 & $x^2y,\, y^2x,\, z^2w,\, w^2x,\, xy$ &
$x^2z,\, xz^2,\,y^2w,\, yw^2$\\
4 & 8 & $x^2y,\, x^2y^2,\, z^2x,\, x^3 y,\, y^3z,\,
z^3w,\,w^3x$ &
$ xy,\,xyz,\, xyw,\, y^2z,\, z^2w,\,
w^2x,\, w^2y,\, x^2z$\\[2mm]
5 & 4 & $-$ & $ xy,\, zw$\\
5 & 5 & $xy,\, yz,\, zw$ & $wu,\, xu$\\[2mm]
6 & 4 & $-$ & $xy,\, zw,\, uv$\\
6 & 5 & $xy,\, zw,\,uv,\,wy$ & $yz,\, wu,\, xv,\, xz$\\[2mm]
7 & 4 & $-$ & $xy,\, zw,\, uv,\, xq,\, yq$\\
7 & 5 & $xy,\, z w,\, u v,\, w y,\, q u$ &
$y z,\, w u,\, v q,\, x z,\, w x$\\[2mm]
8 & 4 & $-$ & $x y,\, y z,\, x z,\, w u,\, w v,\, u v,\, q s$ \\ \hline
\end{tabular}}
\end{center}
\end{table}

\begin{ex} \label{ex:turan}
{\rm
To show how things get complicated, let us
consider the construction for $d=4$. We take $V=
\langle 1,x_1,\ldots,x_n,x_1^2,\ldots,x_n^2 \rangle$ and
\[W= \langle
1,x_1,\ldots,x_n,x_{\alpha_1}x_{\beta_1},\ldots,x_{\alpha_m}x_{\beta_m}
\rangle,\]
where $1\le \alpha_i<\beta_i\le n$ and $m$ is as small as possible. If we take all possible pairs
$x_{\alpha}x_\beta$, then clearly $F_1 \cdot V \cdot W \supseteq F_4$,
while on the other hand, when $m=0$, $F_1 \cdot V \cdot W$ does not contain
any monomials of the form
\begin{equation}
\label{eq:ijkl}
x_i \, x_j \, x_k \, x_\ell
\end{equation}
for $1\le i<j<k<\ell\le n$.
We need a minimal set of $x_\alpha x_\beta$ to cover all possible
monomials \eqref{eq:ijkl}, which is related to the following covering problem.

Given positive integers $r\le k\le n$, we say that a system $S$ of $r$-subsets
(called blocks) of $\{1,\dots, n\}$ is called
a Tur\'an $(n,k,r)$-system if every $k$-subset of $\{1,\dots, n\}$ contains at
least one block from $S$ \cite{Sidorenko}. The minimum size of $S$
is called the Tur\'an number $T(n,k,r)$.

In our case, additional terms $x_{\alpha_1}x_{\beta_1},\ldots,x_{\alpha_m}x_{\beta_m}$ form
a Tur\'an $(n,4,2)$-system. While for most cases
only upper and lower bounds for $T(n,k,r)$ are known, Tur\'an proved
that
\begin{equation}\label{eq:Turan}
T(n,4,2)=mn-3 \, {m(m+1)\over 2},
\end{equation}
where $m=\lfloor n/3\rfloor$. To obtain the minimal set one has to divide $\{1,\ldots,n\}$
into three nearly equal groups (their sizes do not differ for more than one)
and then take all pairs $x_\alpha x_\beta$
such that $\alpha$ and $\beta$ belong to the same group.
As a result, such construction gives a uniform representation
of size $N$, where $N=\frac{1}{6} \, n^2+O(n)$,
which therefore implies $N^*(n,4) \le \frac{1}{6} \, n^2+O(n)$.
\hfill $\clubsuit$
}
\end{ex}

\section{Matrix polynomials} \label{sec:MatPol}
Suppose that we have a uniform representation $M$ of
$p_{n,d}$ as in \eqref{eq:polnd}, and write
\begin{equation}\label{eq:linM}
M=M_0+M_1=M_0 + \sum_{|\alpha| \le d} c_\alpha M_\alpha,
\end{equation}
where each $M_\alpha \in F_1^{N \times N}$.
Now consider the matrix polynomial (cf.~\eqref{eq:polnd})
\[ P_{n,d}=\sum_{|\alpha| \le d} x^\alpha C_\alpha , \]
where $C_\alpha$ is a $k\times k$ matrix. We will show that under
certain assumptions we can construct from $M$ a matrix
$\widetilde{M}$ that represents $P_{n,d}$ in the sense
that
$\det(\widetilde M)=\det(P_{n,d})$.
We obtain $\widetilde M$ from $M$ in the following way. Each element of the form
$\alpha +\beta x +\gamma y$ is replaced by the $k\times k$ matrix
$(\alpha+\beta x + \gamma y)I_k$, where $I_k$ is the $k \times k$ identity,
and each $c_\alpha$ is replaced by the matrix $C_\alpha$.
%

\begin{thm}\label{thm:matpollin} Let \eqref{eq:linM} be a uniform representation
of the generic polynomial \eqref{eq:polnd} of degree $d$ in $n$ variables
and assume that there exist matrices $Q$ and $Z$, whose elements are polynomials in
$x_1,\ldots,x_n$, such that $\det(Q) = \det(Z) = 1$, and
$QMZ$ is a triangular matrix with one diagonal element equal to $p_{n,d}$
and all other diagonal elements equal to 1.
Then
\[\widetilde M = M_0\otimes I_k+
\sum_{|\alpha|\le d}M_\alpha\otimes C_\alpha\]
is a representation for the matrix polynomial $P_{n,d}$, i.e.,
$\det(\widetilde M)=\det(P_{n,d})$.
\end{thm}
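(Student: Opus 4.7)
The plan is to exploit the block structure induced by the Kronecker products and transport the triangularisation hypothesis from $M$ to $\widetilde M$.

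First I would extend the tilde substitution to a map $\sim$ on matrices whose entries are polynomials in the $x_i$ and at most linear in the $c_\alpha$: for such a matrix $A = A_0 + \sum_\alpha c_\alpha A_\alpha$ (with $A_0, A_\alpha$ having entries in $K[x_1,\dots,x_n]$), define $\widetilde A := A_0 \otimes I_k + \sum_\alpha A_\alpha \otimes C_\alpha$. Thus $\widetilde M$ is exactly this extension applied to $M$. The crucial compatibility is the following identity for any polynomial matrices $Q,Z$ whose entries lie in $K[x_1,\dots,x_n]$ (i.e., do not involve any $c_\alpha$):
\[
(Q \otimes I_k)\,\widetilde{A}\,(Z \otimes I_k) = \widetilde{QAZ}.
\]
This follows entry-wise from the mixed-product property $(X\otimes I_k)(Y\otimes C)(W\otimes I_k) = (XYW)\otimes C$, used term by term on $A_0 \otimes I_k$ and each $A_\alpha \otimes C_\alpha$; the hypothesis that $Q$ and $Z$ contain no $c_\alpha$ is what ensures that the substitution commutes with left- and right-multiplication by them.

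Applying this with $A = M$ gives $(Q\otimes I_k)\,\widetilde M\,(Z\otimes I_k) = \widetilde{QMZ} = \widetilde T$, where $T := QMZ$ is the triangular matrix from the hypothesis. Writing $T = T_0 + \sum_\alpha c_\alpha T_\alpha$, triangularity of $T$ as a polynomial identity in the $c_\alpha$ forces each of $T_0$ and all $T_\alpha$ to be triangular individually. Consequently $\widetilde T = T_0 \otimes I_k + \sum_\alpha T_\alpha \otimes C_\alpha$ is block-triangular with $k\times k$ diagonal blocks. Now I would read off these diagonal blocks from the hypothesis on $T$: let $i_0$ be the distinguished index with $T_{i_0 i_0} = p_{n,d} = \sum_\alpha c_\alpha x^\alpha$, and $T_{ii}=1$ for $i\neq i_0$. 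Matching coefficients of $c_\alpha$ on each diagonal entry, $(T_0)_{ii} = 1$ and $(T_\alpha)_{ii}=0$ for $i \neq i_0$, while $(T_0)_{i_0 i_0}=0$ and $(T_\alpha)_{i_0 i_0} = x^\alpha$. Hence the $i$-th diagonal block of $\widetilde T$ is $I_k$ for $i\neq i_0$ and $\sum_\alpha x^\alpha C_\alpha = P_{n,d}$ for $i=i_0$.

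Finally, block-triangularity yields $\det \widetilde T = \det P_{n,d}$, and since $\det(Q\otimes I_k) = (\det Q)^k = 1$ and likewise for $Z$, we conclude
\[
\det \widetilde M = \det\bigl((Q\otimes I_k)\,\widetilde M\,(Z\otimes I_k)\bigr) = \det\widetilde T = \det P_{n,d},
\]
as required. The only nontrivial step is the compatibility identity for the tilde substitution with multiplication; everything else is a routine consequence of triangularity and Kronecker-product determinants. I expect this compatibility to be the main obstacle because it is where one must use, in an essential way, that $Q,Z$ are free of $c_\alpha$ and that $M$ is at most linear in the $c_\alpha$ (otherwise noncommutativity of the $C_\alpha$ would obstruct the substitution from being a well-defined matrix homomorphism).
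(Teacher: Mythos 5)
Your proof is correct and follows the same approach as the paper's, which simply asserts that $(Q\otimes I_k)\widetilde M(Z\otimes I_k)$ is block triangular with one diagonal block $P_{n,d}$ and the rest $I_k$ and then invokes multiplicativity of the determinant. Your write-up usefully makes explicit the compatibility identity $(Q\otimes I_k)\widetilde A(Z\otimes I_k)=\widetilde{QAZ}$ and the coefficient-matching argument that the paper leaves as ``easy to see.''
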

\begin{proof}
It is easy to see that $(Q\otimes I_k)\widetilde M(Z\otimes I_k)$
is a block triangular matrix with one diagonal block $P_{n,d}$ while
all other diagonal blocks are equal to $I_k$. Since $\det(Q\otimes I_k)=\det(Z\otimes I_k)=1$,
it follows that $\det(\widetilde M)=\det(P_{n,d})$.
\end{proof}

\begin{ex}
{\rm
Theorem~\ref{thm:matpollin} applies to the uniform representation~\eqref{eq:repjan}.
Indeed, take
\[
Q=\begin{bmatrix}
1 &  &  & & & & &\cr
& \ddots  & & & & & & \cr
& &  1 & & & & & \cr
& &  & 1 & y& y^2 &y^3 &y^4 \cr
& &  &  & 1 & y & y^2& y^3\cr
& &  & &  & 1 & y& y^2\cr
& &  & & & & 1 & y\cr
& &  &  & & & & 1\end{bmatrix},
\qquad
Z=\begin{bmatrix} 1 &  &  &  &  & & & \cr
x & 1 & & &  & & & \cr
x^2 & x & 1 &  &  &  & & \cr
x^3 & x^2 & x& 1 &  & &  & \cr
x^4 & x^3 & x^2 & x & 1 & & &  \cr
& & & & & 1 & &  \cr
& & & & & & \ddots & \cr
& & & & & & & 1\end{bmatrix},
\]
then
\[
QMZ=
\begin{bmatrix}
     &   1  &      &      &      &     &     &     &     \\
      &     &   1  &      &      &     &     &     &     \\
      &      &     &   1  &      &     &     &     &     \\
      &      &      &     &   1  &     &     &     &     \\
p & \times &  \times & \times &c_{40}&    &     &     &     \\
\times &\times &\times &c_{31}&      &   1 &    &     &     \\
\times &\times &c_{22}&      &      &     &   1 &    &     \\
\times &c_{13}&      &      &      &     &     &   1 &    \\
c_{04}&      &      &      &      &     &     &     &   1
\end{bmatrix}.
\]
It is easy to see that there exist permutation matrices $P_L$ and $P_R$ such that
\[
P_L(QMZ)P_R=
\begin{bmatrix}
1     &     &      &      &      &     &     &     &     \\
      &   1  &     &      &      &     &     &     &     \\
      &      &   1  &     &      &     &     &     &     \\
      &      &      &   1  &     &     &     &     &     \\
c_{40} & \times &  \times & \times &p&    &     &     &     \\
 &\times &\times &c_{31}&  \times    &   1 &    &     &     \\
 &\times &c_{22}&      & \times     &     &   1 &    &     \\
 &c_{13}&      &      & \times     &     &     &   1 &    \\
&      &      &      &  c_{04}    &     &     &     &   1
\end{bmatrix}
\]
is triangular and has the diagonal which satisfies
Theorem \ref{thm:matpollin}. Therefore, we can apply
\eqref{eq:repjan} for matrix polynomials by using block
matrices. This can be generalised to a uniform
representation of size $2d+1$ of the form
\eqref{eq:repjan}. In a similar way we can show that
this also holds for representations of the form
\eqref{eq:repjan2}  of size $2d-1$.
\hfill $\clubsuit$
}
\end{ex}

Unfortunately, not all uniform determinantal representations induce a determinantal
representation of a general matrix polynomial in this manner. As a counterexample,
let $M$ be such a uniform determinantal representation of the polynomial
$p_{n,d}$, $|\alpha|,|\beta| \le d$, and construct a representation of larger size
\[
M'=\begin{bmatrix}
M & 0\\
0 & N\\
\end{bmatrix},
\quad\quad \hbox{with}\quad N=
\left[
\begin{array}{rrrr}
0  & c_{\alpha}  & c_{\beta} & 1\\
-c_{\alpha} & 0  & 1 & 0\\
-c_{\beta} & -1 & 0 & 0\\
-1 & 0  & 0 & 1
\end{array}
\right].
\]
Then $\det(M')=\det(M)\det(N)=
p_{n,d}(1+c_{\alpha}c_{\beta}-c_{\beta}c_{\alpha})=p_{n,d}$, but
$\widetilde{M'}$ is not a representation for the matrix polynomial
$P_{n,d}$ as the coefficient matrices $C_{\alpha}$ and $C_{\beta}$
do not commute in general.
This motivates the following definition.

\begin{de}
A uniform determinantal representation $M$ is \emph{minimal} if there do not exist
constant matrices $P$ and $Z$ such that $\det(P)=\det(Z)=1$ and
\[
QMZ=\begin{bmatrix}
* & *\\
0 & M_2\\
\end{bmatrix},
\quad \quad \hbox{where $M_2$ is square with} \quad \det(M_2) =1.
\]
\end{de}
We speculate that each minimal uniform representation gives rise to a
representation for a matrix polynomial.

\section{Numerical experiments} \label{sec:Numerics}
Recently, a new numerical approach for computing roots of systems of bivariate
polynomials was proposed in \cite{BorMichiel}. The main idea is
to treat the system as a two-parameter
eigenvalue problem using determinantal representations.

Suppose that we are looking for roots of a system of bivariate polynomials
\begin{equation}\label{eq:syspol}
\begin{matrix}p=\sum_{i+j \le d_1} \alpha_{ij} x^i y^j=0,\\[0.2em]
q=\sum_{i+j \le d_2} \beta_{ij} x^i y^j=0,\end{matrix}
\end{equation} where
$p$ and $q$
are polynomials
of degree $d_1$ and $d_2$
over $\CC$. Let
$P=A_0 + x A_1 + y A_2$ and $Q=B_0 + x B_1 + y B_2$,
where $A_0,A_1,A_2 \in \CC^{N_1 \times N_1}$ and $B_0,B_1,B_2 \in \CC^{N_2 \times N_2}$, with $\det(P)=p$ and
$\det(Q)=q$, be determinantal representations of $p$ and $q$, respectively.
Then a root $(x,y)$ of \eqref{eq:syspol} is an
eigenvalue of the two-parameter eigenvalue problem
\begin{equation}\label{eq:twopar}
\begin{matrix}
(A_0+x A_1+y A_2) \, u=0,\\[0.2em]
(B_0+x B_1+y B_2) \, v=0,
\end{matrix}
\end{equation}
where $u\in\CC^{N_1}$ and $v\in\CC^{N_2}$ are nonzero vectors.
The standard way to solve \eqref{eq:twopar} is to
consider a joint pair
 of generalized eigenvalue problems  \cite{Atkinson}
\begin{equation}\label{eq:twopardelta}
\begin{matrix}
(\Delta_1-x \Delta_0) \, w=0,\\[0.2em]
(\Delta_2-y \Delta_0) \, w=0,
\end{matrix}
\end{equation}
where
\[
\Delta_0=A_1\otimes B_2-A_2\otimes B_1,\quad
\Delta_1=A_2\otimes B_0-A_0\otimes B_2,\quad
\Delta_2=A_0\otimes B_1-A_1\otimes B_0,
\]
and $w=u\otimes v$.

In this particular application we can expect that the
pencils in \eqref{eq:twopardelta} are singular, i.e., $\det(\Delta_1-x\Delta_0)\equiv 0$ and
$\det(\Delta_1-y\Delta_0)\equiv 0$. Namely, by
B\'ezout's theorem a generic system \eqref{eq:syspol}
has $d_1d_2$ solutions, while a generic problem \eqref{eq:twopar} has $N_1N_2$ eigenvalues.
Unless $(d_1,d_2)=(N_1,N_2)$, both pencils in \eqref{eq:twopardelta} are singular. In this
case we first have to apply the staircase algorithm from \cite{MuhicPlestenjakLAA}
to extract the finite regular eigenvalues. The method returns smaller matrices
$\widetilde \Delta_0$,
$\widetilde \Delta_1$, and $\widetilde \Delta_2$ (of size $d_1d_2\times d_1d_2$ for a generic \eqref{eq:syspol})
such that $\widetilde \Delta_0$ is nonsingular and ${\widetilde \Delta_0}^{-1}\widetilde \Delta_1$ and
${\widetilde \Delta_0}^{-1}\widetilde \Delta_2$ commute. From
\[
\begin{matrix}
(\widetilde\Delta_1-x \widetilde\Delta_0) \, \widetilde w=0,\\[0.2em]
(\widetilde\Delta_2-y \widetilde\Delta_0) \, \widetilde w=0,
\end{matrix}
\]
we compute the eigenvalues $(x,y)$ using a variant of the QZ algorithm \cite{HKP}
and thus obtain the roots of \eqref{eq:syspol}.

The above approach is implemented in the Matlab package BiRoots \cite{BorBR} together with
the two determinantal representations
from \cite{BorMichiel}. The first one, to which we refer as {\tt Lin1}, is a
uniform one from Example \ref{ex:lin1} of size $\frac{1}{4} \, d^2+ O(d)$ for a polynomial of degree $d$.
The second one, which we refer to as {\tt Lin2}, is not uniform and involves
some computation to obtain a smaller size $\frac{1}{6} \, d^2+ O(d)$.
Although the construction of {\tt Lin2} is more time consuming,
this pays off later, when the staircase algorithm is applied to \eqref{eq:twopardelta}.

Table~\ref{tbl:compare1} shows the sizes of determinantal representations for polynomials of
small degree.
As expected, the new uniform determinantal representation of size
$2d-1$, to which we refer as {\tt MinUnif}, returns smaller matrices, which reflects later
 in faster computational times. It is also
important that {\tt Lin1} and {\tt MinUnif} return real matrices for polynomials with real
coefficients, which is not true for {\tt Lin2}.

\begin{table}[!htbp]
\begin{footnotesize}
\begin{center}
\caption{Size of the matrices for {\tt Lin1} and {\tt Lin2}
for bivariate polynomials ($n=2$) and various degrees $d$.}\label{tbl:compare1}
\begin{tabular}{l|cccccccccc} \hline \rule{0pt}{2.3ex}%
Method & $d=3$ & $d=4$ & $d=5$ & $d=6$ & $d=7$ & $d=8$ & $d=9$ & $d=10$ & $d=11$ & $d=12$\\
\hline \rule{0pt}{2.3ex}%
{\tt Lin1} & 5 & 8 & 11 & 15 & 19 & 24 & 29  & 35 & 41 & 48  \\
{\tt Lin2} & 3 & 5 & \ph{1}8 & 10 & 13 & 17 & 20  & 24 & 29 & 34\\
{\tt MinUnif} & 5 & 7 & \ph{1}9 & 11 & 13 & 15 & 17  & 19 & 21 & 23\\
\hline
\end{tabular}
\end{center}
\end{footnotesize}
\end{table}

It was reported in
\cite{BorMichiel} that the determinantal representation approach for solving
systems of bivariate polynomials is competitive for polynomials of degree 9 or less.
As we show below, the new uniform representation {\tt MinUnif} extends
this to degree 15 and, in addition, performs
better than the existing representations for polynomials of degree 6 or more.

In \cite{BorMichiel} the approach was compared numerically to the following
state-of-the art numerical methods for polynomial systems:
{\tt NSolve} in Mathematica~9~\cite{Wolfram},
BertiniLab~1.4~\cite{Bertini} running Bertini~1.5~\cite{BertiniExe},
NAClab~3.0~\cite{NAClab},
and {\tt PHCLab}~1.04~\cite{PHClab} running PHCpack~2.3.84,
which turned out as the fastest of these methods.
To show the improved performance of the new determinantal representation,  we compare
{\tt MinUnif} to {\tt Lin1}, {\tt Lin2}, and {\tt PHCLab}
in Table~\ref{tbl:two}. For each $d$ we run the methods on the same set of
50 real and 50 complex random polynomial systems of degree $d$
and measure the average time.
For {\tt Lin1} and {\tt MinUnif}, where determinantal representations have real
matrices for real polynomials, we report separate results
for polynomials with real and complex coefficients. The timings
for {\tt Lin1} and {\tt Lin2} are given only for $n\le 10$ as
for larger $n$ these two linearisations are no longer competitive.

\begin{table}[!htbp]
\begin{footnotesize}
\begin{center}
\caption{Average computational times in milliseconds
for {\tt Lin1}, {\tt Lin2}, {\tt MinUnif}, and {\tt PHCLab} for
random full bivariate polynomial systems of degree $3$ to $15$. For {\tt Lin1} and {\tt MinUnif}
separate results are included for real $(\RR)$ and complex polynomials $(\CC)$.
}\label{tbl:two}
\begin{tabular}{r|rrrrrr} \hline \rule{0pt}{2.3ex}%
$d$ & {\tt Lin1} ($\RR$) & {\tt Lin1} ($\CC$) & {\tt Lin2} & {\tt PHCLab} & {\tt MinUnif} ($\RR$) & {\tt MinUnif} ($\CC$) \\
\hline \rule{0pt}{2.3ex}%
3 & \ph{111}6 & \ph{111}8 & \ph{11}4 & \ph{1}116 & \ph{11}6 & \ph{11}7 \\
4 & \ph{111}9 & \ph{11}11 & \ph{11}6 & \ph{1}130 & \ph{1}12 & \ph{11}13 \\
5 & \ph{11}20 & \ph{11}26 & \ph{1}13 & \ph{1}151 & \ph{1}18 & \ph{11}20 \\
6 & \ph{11}39 & \ph{11}71 & \ph{1}28 & \ph{1}174 & \ph{1}27 & \ph{11}27 \\
7 & \ph{11}96 & \ph{1}160 & \ph{1}51 & \ph{1}217 & \ph{1}36 & \ph{11}44 \\
8 & \ph{1}205 & \ph{1}395 & 118 & \ph{1}264 & \ph{1}59 & \ph{11}74   \\
9 & \ph{1}467 & 1124 & {279} & \ph{1}329 & \ph{1}{95} & \ph{1}125 \\
10 & 1424 & 3412 & 600 & \ph{1}414 & {147} & \ph{1}221  \\
11 &  &  &  & \ph{1}538 & {248} & \ph{1}354 \\
12 &  &  &  & \ph{1}650 & {361} & \ph{1}530 \\
13 &  &  &  & \ph{1}911 & {592} & \ph{1}740 \\
14 &  &  &  & 1142 & {842} & 1148  \\
15 &  &  &  & 1531 & {1237} & 1835  \\
\hline
\end{tabular}
\end{center}
\end{footnotesize}
\end{table}

Of course, the computational time is not the only important factor, we also have to
consider the accuracy and reliability. In each step of the staircase algorithm a
rank of a matrix has to be estimated numerically, which is a delicate task.
After several steps it may happen that the gap between the important singular values
and the meaningful ones that should be zero in exact computation, virtually disappears.
In such case the algorithm fails and does not return any roots.
As the number of steps in the staircase algorithm increases with degree of the polynomials,
such problems occur more often for polynomials of large degree.
 A heuristic that usually helps in such
cases is to apply the algorithm on a transformed system
\begin{align*}
\widetilde p &:= \ph{-}c p + s q = 0,\\
\widetilde q &:= -s p + c q = 0
\end{align*}
for random $c$ and $s$ such that $c^2+s^2=1$.
As this transformation does not change the conditioning of the roots, we
can conclude that the difficulties with the staircase algorithm are not directly related to
the conditioning. The trick does not work every time, and it seems that for
some systems the only way to make the determinantal representation approach to work is
to increase the machine precision.

We can apply the same approach to systems of polynomials in more than two variables.
However, since the size of the corresponding $\Delta$ matrices
is the product of sizes of all representations, this is competitive only
for $n=3$ and $d\le 3$. For a comparison, if we have a system of three polynomials
in three variables of degree $3$, then the size of the $\Delta$ matrices
is $343\times 343$. For degree $4$ the size increases to $1000\times 1000$
and PHCpack is faster. Finally, for $n=4$ and the smallest nontrivial $d=2$
we already get $\Delta$ matrices of size $625\times 625$ and the method
is not efficient.

\section{Outlook} \label{sec:Outlook}

We have introduced {\em uniform determinantal representations}, which
rather than representing a single polynomial as the determinant of a
matrix of affine-linear forms, represent all polynomials of degree at
most $d$ in $n$ variables as such a determinant.  We have seen that in
the bivariate case, these determinantal representations are useful for
numerically solving bivariate systems of equations; and in the general
multivariate case we have determined, up to constants, the asymptotic
behaviour of $N^*(n,d)$, the minimal size of such a representation,
for $n$ fixed and $d \to \infty$.

We now summarise several results that have been shown in the paper.
\begin{itemize}
\item For fixed $n$ and $d \to \infty$, $N^*(n,d) \sim d^{n/2}$, see
Theorem~\ref{thm:Main}.
This is a noticeable improvement on \cite{Quarez}, where an asymptotic rate
of $N^*(n,d) \sim d^n$ is shown, with the remark that the representation
in \cite{Quarez} are symmetric. However, symmetry currently cannot be exploited
by methods that compute roots of multivariate polynomial systems.
\item For fixed odd $d$ and $n \to \infty$, $N^*(n,d) \sim n^{(d-1)/2}$, which
is the same rate as Quarez \cite{Quarez}, who also manages to get symmetric representations.
For fixed even $d$ and $n \to \infty$, $N^*(n,d) \lsim n^{d/2}$, which
again is the same rate as in \cite{Quarez}. However, we have a slightly
smaller lower bound for the asymptotic rate of $n^{(d-1)/2}$.
\item Tables~\ref{tab:minnd} and \ref{tab:wv} give constructions
for the smallest representations that we have been able to find for some small values of $n$ and $d$.
\item $N^*(n,2) \le n+1$; cf.~Table~\ref{tab:minnd}.
\item $N^*(n,3) \le 2n+1$; cf.~Table~\ref{tab:minnd}.
\item $N^*(n,4) \le \frac{1}{6} n^2+O(n)$; see Example~\ref{ex:turan}.
\item $N^*(2,d) \le 2d-1$; cf.~Table~\ref{tab:minnd}.
Note that this result satisfies Dixon~\cite{Dixon}
up to an asymptotic factor 2 whereby no computations are necessary
for the determinantal representation.
In particular, it is a major improvement on the $\sim \frac{1}{4} d^2$
of \cite{Quarez, BorMichiel}.
\item Due to the smaller sizes of the representations, the numerical
approach for bivariate polynomials ($n=2$) is competitive to (say)
Mathematica for degree $d$ up to $d \approx 15$ (see Section~\ref{sec:Numerics});
this in contrast to $d \approx 9$ as obtained in \cite{BorMichiel}.
\item Under some conditions, the results carry over to the case
of matrix coefficients (see Section~\ref{sec:MatPol}).
\end{itemize}
There are still many interesting open questions, both of intrinsic mathematical
interest and of relevance to polynomial system solving.
First of all, in a situation where the degree $d$ is fixed and the number
$n$ of variables grows, what is the asymptotic behaviour of $N^*(n,d)$?
Although we expect that the above inequalities are equalities,
whether this holds is still open.
For general fixed $d$, the proof of Theorem~\ref{thm:Main} yields a lower
bound which is a constant (depending on $d$) times $n^{(d-1)/2}$. For odd
$d$ we obtain a matching upper bound (with a different constant) by using
Proposition~\ref{prop:Cons2} with $V=W=F_{(d-1)/2}$. However, for even $d$
we only know how to obtain $O(n^{d/2})$. We remark that this latter is
(up to a constant) the same bound as obtained in \cite[Thm.~4.4]{Quarez}
for {\em symmetric} uniform representations.

Second, in the case of fixed $n$ and varying $d$ studied in this paper,
what are the best constants in Theorem~\ref{thm:Main}? More specifically,
for fixed $n$, does $\lim_{d \to \infty} \frac{N^*(n,d)}{d^{n/2}}$ exist,
and if so, what is its value?

Third, how can our techniques for upper bounds and lower bounds be further
sharpened? Can singular matrix spaces {\em other} than compression spaces
be used to obtain tighter upper bounds (constructions) on $N^*(n,d)$?
Can the action of the affine group be used more systematically to find
lower bounds on $N^*(n,d)$?

Fourth, is it true that each minimal uniform representation gives
rise to a representation of the corresponding matrix polynomial
(cf.~Section~\ref{sec:MatPol})?

Finally, we have restricted our attention to matrices that, apart from
being affine-linear in $x_1,\dots,x_n$, are also affine-linear in the
coefficients $c_\alpha$. Our proofs give the same asymptotic behaviour
(with different constants) if we require, in addition, that no quadratic terms $c_\alpha
x_i$ may occur in $M$. If, instead, we relax the condition that
$M$ be affine-linear in the $c_\alpha$ to a {\em polynomial} dependence on
the $c_\alpha$, then the same bounds still apply; see
Remark~\ref{re:Poly}. But what if we relax this to
{\em rational} dependence of $M$ on the $c_\alpha$?  Given that $p_{n,d}$
is only linear in the $c_\alpha$ it seems unlikely that allowing $M$ to
be rational in the $c_\alpha$ we would gain anything, but we currently
do not know how to formalise this intuition. On the other hand, in
cases where a (non-uniform) determinantal representation of size $d$ is known to
exist for every (or sufficiently general) polynomials of degree $d$
in $d$ variables (e.g., in the case of plane curves), it follows that
this representation can be chosen to have entries {\em algebraic} in
the $c_\alpha$. This observation rules out approaches
aimed at proving lower bounds in a too general setting.
\vskip1in

\end{document}